\documentclass[11pt,oneside,leqno]{article}

\usepackage[T1]{fontenc}
\usepackage{mathptmx}
\usepackage{amsthm,amsmath,amsfonts,amssymb,mathtools}
\usepackage[numbers]{natbib}
\usepackage{booktabs,array}
\usepackage{enumitem}
\usepackage{graphicx}
\usepackage{microtype}
\usepackage[hidelinks]{hyperref}
\usepackage[margin=1.05in]{geometry}

\hypersetup{
  pdftitle={Sharp Berry--Esseen Bounds for the Log Determinant of a Gaussian Sample Correlation Matrix},
  pdfauthor={Hongru Zhao}
}

\newtheorem{theorem}{Theorem}[section]
\newtheorem{proposition}[theorem]{Proposition}
\newtheorem{lemma}[theorem]{Lemma}
\newtheorem{corollary}[theorem]{Corollary}

\theoremstyle{definition}

\numberwithin{equation}{section}

\newcommand{\E}{\mathbb E}
\newcommand{\Pp}{\mathbb P}
\newcommand{\R}{\mathbb R}
\newcommand{\N}{\mathcal N}
\newcommand{\tr}{\operatorname{tr}}
\newcommand{\diag}{\operatorname{diag}}
\newcommand{\Var}{\operatorname{Var}}

\newcommand{\Log}{\operatorname{Log}}

\newcommand{\cA}{\mathcal A}
\newcommand{\DK}{d_{\mathrm K}}
\newcommand{\abs}[1]{\lvert#1\rvert}

\newcommand{\Znull}{Z_{0,m,p}}
\newcommand{\ZR}{Z_{R,m,p}}
\makeatletter
\newcommand{\bdoi}[1]{\href{https://doi.org/#1}{\nolinkurl{https://doi.org/#1}}}
\providecommand{\bibfont}{\normalsize}
\newenvironment{frontmatter}{}{}
\newenvironment{aug}{}{}

\newcommand{\runtitle}[1]{}

\newcommand{\printead}[2][]{\unskip}

\makeatother

\begin{document}
\hypersetup{pdfsubject={Submitted to Probability Theory and Related Fields}}
\raggedbottom
\hypersetup{pageanchor=false}

\begin{frontmatter}
\title{Sharp Berry--Esseen Bounds for the Log Determinant of a Gaussian
Sample Correlation Matrix}
\runtitle{Sharp Berry--Esseen bounds for a correlation log determinant}

\begin{aug}
\author{Hongru Zhao}
\date{School of Statistics, University of Minnesota, Minneapolis,
Minnesota, USA\\\href{mailto:zhao1118@umn.edu}{zhao1118@umn.edu}}
\maketitle
\end{aug}

\begin{abstract}
Let $\widehat R$ be the Pearson sample correlation matrix formed from $n$
independent Gaussian observations in $p$ dimensions, and write $m=n-1\ge p$.
Under the null correlation $R=I_p$, the classical independent beta product,
exact cumulants, and full Fourier inversion yield, along every sequence
$p\to\infty$ with $m\ge p$, a uniform first Edgeworth expansion for
$\log\det\widehat R$, centered by its exact mean and scaled by its exact
standard deviation. The expansion identifies the exact finite dimensional
skewness correction and gives the sharp Kolmogorov equivalent
$A_{m,p}/\{6\sqrt{2\pi}\,V_{m,p}^{3/2}\}$, where $V_{m,p}$ is the exact variance
and $A_{m,p}$ is the absolute third cumulant. This equivalent unifies the
square, fixed gap, growing gap, proportional, and dilute regimes; in the square
regime the error has order $(\log p)^{-3/2}$ with an exact constant. For every
positive definite population correlation matrix $R$, we prove a uniform
finite sample Berry--Esseen bound that explicitly tracks population dependence.
All theoretical results have exact or proved equivalent Lean 4 formulations whose declarations and dependencies are kernel checked.
\end{abstract}

\end{frontmatter}

\pagenumbering{arabic}
\setcounter{page}{1}
\hypersetup{pageanchor=true}

\section{Introduction}

A correlation matrix records how strongly several measured quantities move
together.  The determinant of a correlation matrix has a geometric meaning:
the determinant is the squared volume
of a parallelepiped built from standardized data vectors.  A determinant near
one means that the directions are close to orthogonal; a determinant near
zero means that some directions are nearly redundant.  Taking a logarithm
turns products of squared projection factors into sums, which is the natural setting
for a central limit theorem.  The determinant has consequently appeared in
tests of independence and in high dimensional correlation inference.
Moreover, a sample size multiple of $-\log\det\widehat R$ is the Gaussian
likelihood ratio statistic for complete independence, while
$-\tfrac12\log\det\widehat R$ is the empirical analogue of Gaussian total correlation
\cite{Wilks1932,Watanabe1960,RoweDay2019}; see also
Muirhead~\cite[Chapter~8]{Muirhead1982}, Jiang and
Qi~\cite{JiangQi2015}, and Jiang~\cite{Jiang2019}.  Finite-dimensional
determinant laws and scale free scatter representations were studied in
\cite{GuptaRathie1983,MathaiProvost2024,SebastianPrincy2024}.

The same determinant is a squared random volume.  This connects the problem
to spherical matrix models and high dimensional random simplex geometry
\cite{Dawid1977,GusakovaHeinyThale2023,ShanLi2026}.  It also distinguishes
the present Pearson statistic from a covariance determinant: random
coordinatewise normalization is part of the object rather than a removable
scale factor.

The principal result is a uniform Berry--Esseen theorem with an explicit
first order asymptotic constant for the standardized log determinant.  Under
$R=I_p$, Theorem~\ref{thm:null-edgeworth} proves an
exact first order asymptotic for the Kolmogorov distance, uniformly over every
admissible sequence $p\to\infty$ with $m\ge p$.  For arbitrary $R$, Theorem
\ref{thm:general-full} supplies a uniform finite bound with an explicit
separation between the Wishart leading term and the nonlinear
diagonal standardization remainder.

There is one complication.  The quality of the normal approximation depends
strongly on the residual dimension gap $m-p$.  When the sample size is
well above the dimension, many small effects average out.  When the residual
dimension gap is small, the last few squared Gram--Schmidt heights have
substantial mass near zero, and their logarithms retain appreciable skewness.
Thus no answer based only on a
fixed limiting aspect ratio $p/m\to\gamma$ can describe the whole parameter
range.
The terminology reflects the Marchenko--Pastur lower spectral edge and its
Bessel type boundary behavior \cite{MarchenkoPastur1967,TracyWidom1994}.
Near-square covariance log determinants have related, but different,
asymptotics because they do not contain Pearson normalization
\cite{WangHanPan2018}.

The argument avoids choosing an asymptotic regime too early. We first derive
the exact finite dimensional third cumulant in
\eqref{eq:A-lambda}.  Only after the probability approximation is
complete do we simplify \eqref{eq:unified-equivalent} in the square,
proportional, or dilute limits.  The order of work just described
prevents endpoint terms from being accidentally discarded.  The exact beta
product underlying the reduction is classical in random Gram matrix theory;
Rouault~\cite{Rouault2007} gives precise finite dimensional statements, and
Heiny, Johnston and Prochno~\cite[Theorem~A]{HJP2022} give a published
all regime normal approximation bound after the parameter identification in
Appendix~\ref{app:beta}.

The second philosophy is that a sharp Berry--Esseen result needs signed
information.  A classical inequality adds absolute third moments.  The
absolute moment inequality is safe but destroys cancellation inside the
nearly Gaussian factors and can
lose a square root of the dimension.  The signed third cumulant gives the
correct first departure from normality.  To turn the signed local expansion into a
Kolmogorov result, however, every Fourier frequency must be controlled.  The
exact beta product supplies the required global control.  Xie and
Sun~\cite[Theorem~2.1]{XieSun2021} prove higher order Edgeworth expansions in
the proportional regime. The present paper establishes the endpoint uniform
sharp Kolmogorov equivalent when the aspect ratio may also approach one.

For general population correlation, the diagonal standardization introduces
a dependent nonlinear error.  We separate an analytically tractable leading
term from the nonlinear error.  The resulting upper bound is uniform but not
declared sharp.  The distinction between a proved upper bound and a sharp
equivalent is maintained throughout.  The decomposition is organized by a
Wiener chaos expansion of the nonlinear diagonal terms and is developed from
the Gaussian sample correlation identities of Zhao~\cite{Zhao2026}, while the
matrix gamma and Wishart inputs are taken exactly from
Muirhead~\cite[Theorems~2.1.11 and 3.2.1]{Muirhead1982}.
The Gaussian expansion belongs to the classical Wiener chaos framework
\cite{Wiener1938,Janson1997,NourdinPeccati2012}; related determinant
applications include Notarnicola and Diez--Tudor
\cite{Notarnicola2023,DiezTudor2023}.

\subsection{Contributions and scope}

The paper makes five mathematical contributions.  First, the beta product
representation is combined with exact centering and scaling to produce the
finite dimensional cumulant ratio in \eqref{eq:unified-equivalent}.  Second,
a model specific characteristic function argument with full frequency Fourier
inversion turns the signed cubic term into the uniform expansion in
\eqref{eq:null-edgeworth}; control only near frequency
zero would not suffice.  Third, Proposition~\ref{prop:asymptotics} evaluates
the exact cumulant ratio uniformly across the hard, proportional, and dilute
regions.  Fourth, Corollary~\ref{cor:square-supremum} proves that the square
model is asymptotically worst after optimization over the full admissible
domain.  Fifth, Theorems~\ref{thm:general-leading} and
\ref{thm:general-full} give a two stage bound for arbitrary population
correlation.

The paper also makes the scope of its formalization explicit.  Section~\ref{sec:lean}
states which probability claims were encoded in Lean and which judgments
remain outside the kernel.  The distinction is important because three kinds
of correctness are involved.  Mathematical correctness asks whether a
conclusion follows from assumptions.  Translation correctness asks whether
the formal definitions express the intended sample correlation problem.
Provenance correctness asks whether a cited source states exactly the
attributed result.  The Lean development imports no literature result as an
unproved source statement: every formal dependency is proved in the project
or supplied by kernel checked mathlib.  Consequently, bibliographic
provenance cannot introduce an error into the kernel checked theorem.
Translation correctness and scholarly attribution are audited separately in
Appendix~\ref{app:ledger}.

The scope of these results should be kept visible.  The statistic is exactly centered
and scaled; changing the normalization can change a sharp first order
constant.  The null expansion is uniform over $m\ge p$, but the bound for
arbitrary $R$ is not asserted to be sharp.  Theorems~\ref{thm:null-edgeworth} and
\ref{thm:general-full} concern Gaussian
observations; the exact beta product and Wishart transform are Gaussian
structures.  Finally, Theorems~\ref{thm:null-edgeworth} and
\ref{thm:general-full} control Kolmogorov distance, so the two theorems do
not automatically provide a relative error approximation for extremely
small tail probabilities.

\subsection{How to read the notation}

Only three scales drive the main conclusions.  The quantity $V_{m,p}$ in
\eqref{eq:null-variance} is the exact null variance.  The quantity $A_{m,p}$
in \eqref{eq:A-lambda} is the absolute value of the signed third cumulant.
The gap $d=m-p$ records the number of residual directions left after the last
sample variable is added.  Equation~\eqref{eq:unified-equivalent} should be
read first in terms of $A_{m,p}$ and $V_{m,p}$; the gap $d$ enters only when
Proposition~\ref{prop:asymptotics} simplifies the exact sums.

The symbols $M_R$ and $E_R$ appear only in the arbitrary correlation part.
Equation~\eqref{eq:general-decomposition} defines $M_R$ as the analytically
tractable leading variable and $E_R$ as the centered nonlinear remainder.
The scale $s_R$ in \eqref{eq:general-scale} combines the null variance with
the population correlation energy.  Section~\ref{sec:generalR} repeatedly
returns to the decomposition in \eqref{eq:general-decomposition}, so the
reader need not retain the coordinate formulas on a first reading.

\subsection{Why a classical Berry--Esseen inequality is not enough}

For a sum of independent centered variables, a classical Berry--Esseen
inequality controls Kolmogorov distance by a sum of absolute third moments
divided by the variance to the three halves power.  Proposition
\ref{prop:beta-product} makes such an inequality available in principle under
$R=I_p$.  The resulting quantity is safe as an upper bound, but the absolute
value is taken before contributions from different beta factors are combined.

The sharp asymptotic problem is sensitive to the order of signed summation and
the final absolute value.
The exact logarithmic cumulant in \eqref{eq:exact-cumulants} adds signed
third order contributions first, and only then takes an absolute value through
$A_{m,p}$ in \eqref{eq:A-lambda}.  The cancellation is particularly important
away from the hard edge.  A generic absolute moment bound can therefore have
the wrong order even though the beta factors are independent and every
classical hypothesis is satisfied.

The signed calculation still needs a global error bound.  A formal Taylor
expansion of the characteristic function identifies the candidate coefficient
but does not prove that the candidate coefficient dominates all uncomputed terms after
Fourier inversion.  Theorem~\ref{thm:null-edgeworth} combines the signed local
coefficient with model specific frequency decay.  The combination, rather
than the cubic Taylor term alone, is what upgrades a heuristic Edgeworth
correction to the exact Kolmogorov equivalent in
\eqref{eq:unified-equivalent}.

The distinction also explains the two part structure of the paper.  Section
\ref{sec:null} states a probability approximation while preserving exact
polygamma quantities.  Sections~\ref{sec:rates}--\ref{sec:regimes} evaluate
the exact polygamma quantities under different dimension constraints.
Combining the two
steps too early would entangle a Fourier remainder with regime specific
algebra and would make the hard endpoint difficult to audit.

The exposition develops the probability and special function ingredients
before the technical proofs.  The appendices assume familiarity with
characteristic functions, analytic methods, Gaussian expansions, and Wishart
calculations.
Section~\ref{sec:background} introduces the model and the small
amount of special function language.  Section~\ref{sec:prior} states exactly
what is borrowed from the literature.  Sections~\ref{sec:null} and
\ref{sec:generalR} give the main results and the corresponding proof ideas.
Section~\ref{sec:rates}
derives a unified formula, and Section~\ref{sec:regimes} specializes the
formula under common dimension gap constraints. Section~\ref{sec:applications}
explains the statistical meaning of the bounds. Section~\ref{sec:lean}
records the formal verification boundary. The appendices contain the proofs
needed to make the main paper self contained, together with the statement level
Lean crosswalk. The thirteen declarations in
Appendix~\ref{app:ledger} are the principal public theorem endpoints, not an
inventory of every supporting declaration in their dependency cone. Lean
formalization covers, directly or through proved finite sample bridges,
Proposition~\ref{prop:beta-product}, Theorems~\ref{thm:null-edgeworth},
\ref{thm:general-leading}, and \ref{thm:general-full}, all clauses of
Proposition~\ref{prop:asymptotics}, and
Corollary~\ref{cor:square-supremum}. Here a proved finite sample bridge is an
exact Lean identity of functions or probability laws, valid for every
admissible finite (m,p), that connects the statistic stated in the paper to
the canonical random variable used in the formal proof. It therefore proves
that the formal endpoint is equivalent to the paper statement, rather than
merely asymptotically related to it. Appendix~\ref{app:ledger} lists the exact
or proved equivalent relations. Kernel checking applies to formal declarations
and their dependencies; the mathematical prose of the article
is not itself kernel checked.

\section{Model and mathematical background}\label{sec:background}

\subsection{The sample correlation matrix}

Let $x_1,\ldots,x_n$ be independent $N_p(\mu,\Sigma)$ observations.  Define
the scatter matrix and the corresponding correlation normalization by
\begin{align}
 S&=\sum_{k=1}^n(x_k-\bar x)(x_k-\bar x)^\top,\label{eq:scatter}\\
 \widehat R&=\diag(S)^{-1/2}S\diag(S)^{-1/2}.\label{eq:sample-corr}
\end{align}
The population correlation matrix is
\begin{equation}
 R=\diag(\Sigma)^{-1/2}\Sigma\diag(\Sigma)^{-1/2}.
 \label{eq:population-corr}
\end{equation}
We write $m=n-1$.  Throughout, $p\to\infty$, $R$ is positive definite, and
$m\ge p$.  The last inequality is not cosmetic: if $p>m$, then $S$ has rank
at most $m$, so $\det\widehat R=0$ almost surely and an ordinary finite log
determinant does not exist. Thus $m\ge p$ is necessary for the ordinary
finite log determinant to exist, with $m=p$ as the common endpoint.

The loss of one degree of freedom explains the notation $m=n-1$.  Subtracting
the sample mean projects every data column onto the subspace of dimension $m$
orthogonal to the all ones vector.  Under $R=I_p$, the projected columns are
independent isotropic Gaussian vectors in the centered data subspace.
Dividing each projected column by the Euclidean norm of the projected column
removes the sample variance and leaves
an independent uniform direction.  The matrix in \eqref{eq:sample-corr} is
therefore a Gram matrix of random directions.  Appendix~\ref{app:beta}
turns the geometric description into the distributional equality in
\eqref{eq:beta-product}.

The Gram interpretation also explains the endpoint $m=p$.  At the square
endpoint, the final direction has only one new orthogonal coordinate
available.  The final squared height can consequently approach zero much
more strongly than a typical squared projection factor in a dilute design.
Taking logarithms magnifies these small final factors.  The variance grows, but the standardized
sum retains enough skewness to make the normal approximation converge at a
logarithmic rather than polynomial rate.  Corollary
\ref{cor:square-supremum} converts the geometric intuition into an exact
worst case statement.

For $R\ne I_p$, the centered columns cease to be independent across variable
index, although the scatter matrix still has a Wishart representation.
Correlation normalization divides the Wishart matrix by the random diagonal
of the Wishart matrix.
The log determinant therefore contains a log Wishart determinant and a sum
of dependent log diagonal terms.  Equation~\eqref{eq:general-decomposition}
is designed to isolate the linear part of the dependent log diagonal terms
before the normal approximation is attempted.

\subsection{Kolmogorov distance and exact normalization}

For random variables $X,Y$, the Kolmogorov distance is
\begin{equation}
 \DK(X,Y)=\sup_{x\in\R}\abs{\Pp(X\le x)-\Pp(Y\le x)}.
 \label{eq:kolmogorov}
\end{equation}
Let $\Phi$ and $\phi$ be the standard normal distribution and density.

The gamma function extends the factorial.  The logarithmic derivatives of
the gamma function are
the digamma and polygamma functions:
\begin{equation}
 \psi(x)=\frac{\Gamma'(x)}{\Gamma(x)},\qquad
 \psi_r(x)=\frac{\mathrm d^{r+1}}{\mathrm d x^{r+1}}\log\Gamma(x).
 \label{eq:polygamma-definition}
\end{equation}
Under $R=I_p$, Guerrero~\cite{Guerrero1994} gave an early moment
calculation; the corrected exact mean and variance below are due to Rowe and
Day~\cite{RoweDay2019}:
\begin{align}
 b_{m,p}
  &=\sum_{j=2}^p\left\{
       \psi\!\left(\frac{m-j+1}{2}\right)-\psi\!\left(\frac m2\right)
     \right\},\label{eq:null-mean}\\
 V_{m,p}
  &=\sum_{j=2}^p\left\{
       \psi_1\!\left(\frac{m-j+1}{2}\right)-\psi_1\!\left(\frac m2\right)
     \right\}.\label{eq:null-variance}
\end{align}
Equations~\eqref{eq:null-mean} and \eqref{eq:null-variance} are derived from
the beta product in Appendix~\ref{app:beta}.  Under
$R=I_p$, set
\begin{equation}
 \Znull=\frac{\log\det\widehat R-b_{m,p}}{\sqrt{V_{m,p}}}.
 \label{eq:null-statistic}
\end{equation}
Exact centering and scaling matter: an error of the same order as the first
Edgeworth correction can change the leading Kolmogorov constant.

Kolmogorov distance has a direct calibration interpretation.  If
$\DK(X,\N(0,1))\le\delta$, then the probability assigned by the normal
approximation on every interval of the form $(-\infty,x]$ differs from the exact probability by
at most $\delta$.  In particular, normal critical values have absolute size
error at most $\delta$.  The metric is strong enough to compare distribution
functions uniformly, while remaining compatible with characteristic function
smoothing.  Theorem~\ref{thm:null-edgeworth} goes beyond an inequality by
identifying the asymptotic value of the worst CDF discrepancy.

Exact normalization serves two roles.  The exact center $b_{m,p}$ removes the
first cumulant of the log beta sum, and the exact variance $V_{m,p}$ makes the
second cumulant equal to one.  More subtly, exact normalization prevents a
deterministic location or scale error from competing with the cubic
Edgeworth term.  For a qualitative central limit theorem, a center differing
by $o(\sqrt{V_{m,p}})$ can be sufficient.  For the equivalent in
\eqref{eq:unified-equivalent}, the permissible difference is smaller because
the target Kolmogorov error itself tends to zero.

The scale $s_R$ for general $R$ in \eqref{eq:general-scale} has the same design
principle.  The first term is the null log determinant variance, and the
second term is the variance contributed by the linear population correlation
fluctuation.  Equations~\eqref{eq:scale-mismatch} and
\eqref{eq:tau-comparison} quantify, rather than merely assume, the relation
between $s_R$ and the exact variances arising in the proof.

For general $R$, let
\begin{equation}
 A_R=R-I_p,\qquad a_R=\tr(A_R^2),\qquad
 s_R^2=V_{m,p}+\frac{2a_R}{m},
 \label{eq:general-scale}
\end{equation}
and define
\begin{equation}
 \ZR=\frac{\log\det\widehat R-\log\det R-b_{m,p}}{s_R}.
 \label{eq:general-statistic}
\end{equation}

\subsection{Cumulants and the first Edgeworth correction}

If $K_X(z)=\log\E e^{zX}$ is analytic near zero, then the coefficient of
$z^r/r!$ is the $r$th cumulant $\kappa_r(X)$.  The first two cumulants are the
mean and variance.  For a centered variance one variable, the third cumulant
measures signed skewness.  The first formal correction to the normal CDF is
\begin{equation}
 \Phi(x)+\frac{\kappa_3}{6}(1-x^2)\phi(x).
 \label{eq:edgeworth-philosophy}
\end{equation}
Appendix~\ref{app:fourier} proves the Fourier identity in
\eqref{eq:edgeworth-philosophy} and, more importantly, proves that the
remainder is uniformly smaller in the present problem.

The phrase ``first Edgeworth correction'' can be understood without using a
general Edgeworth theorem.  After exact standardization, the logarithm of the
characteristic function begins with the Gaussian quadratic term.  The next
term is cubic and has coefficient equal to the standardized third cumulant.
Fourier inversion maps the cubic frequency term to the polynomial correction
in \eqref{eq:edgeworth-philosophy}.  The sign of the third cumulant is retained
throughout the Fourier calculation.  Retaining the sign is the reason that
$A_{m,p}$, rather than a sum of absolute third moments of the individual
log beta variables, appears in \eqref{eq:unified-equivalent}.

The local expansion alone is not a distributional theorem.  Fourier
inversion integrates over an unbounded frequency range, so a Taylor formula
valid near zero leaves the middle and high frequencies uncontrolled.  The
proof of Theorem~\ref{thm:null-edgeworth} uses the gamma product identity
\eqref{eq:gamma-modulus-product} to establish the local Gaussian damping
estimate \eqref{eq:null-local-fourier} and the complementary power decay
estimate \eqref{eq:null-tail-fourier}.  Section~\ref{sec:null} gives the proof
architecture, and Appendix~\ref{app:fourier} supplies the quantified
frequency estimates.

\subsection{Three levels of normal approximation}

A central limit theorem, a Berry--Esseen bound, and an Edgeworth expansion
answer different questions.  A central limit theorem asserts that the
Kolmogorov distance in \eqref{eq:kolmogorov} tends to zero, but a central
limit theorem need not state a rate.  A Berry--Esseen bound supplies an
explicit upper envelope for the distance.  An Edgeworth expansion identifies
a signed correction to the Gaussian CDF and a remainder smaller than the
signed correction.

The published bound \eqref{eq:HJP} is a Berry--Esseen statement in the
second sense: it gives a finite upper bound with constant $28$.
Theorem~\ref{thm:null-edgeworth} is an Edgeworth statement in the third
sense, and \eqref{eq:unified-equivalent} converts the signed expansion into a
sharp Berry--Esseen asymptotic.  Theorem~\ref{thm:general-full} is an upper
bound for arbitrary $R$; no signed equivalent for arbitrary $R$ is claimed.

The distinctions prevent two overstatements.  A rate upper bound cannot be
called an exact asymptotic unless a matching lower bound is proved.  Likewise,
a pointwise Edgeworth formula cannot be promoted to Kolmogorov distance
without a remainder uniform in the threshold.  The lower bound in Theorem
\ref{thm:null-edgeworth} comes from the signed profile at $x=0$, and the
uniformity comes from full Fourier inversion.

The triangular array aspect creates a second layer of uniformity.  For each
$p$, the distribution contains $p-1$ beta factors whose parameters depend on
$m$.  The number of summands, the smallest analytic radius, and the variance
all change together.  A theorem uniform only over $x$ at one fixed aspect
ratio is therefore different from Theorem~\ref{thm:null-edgeworth}, which is
uniform over both the threshold and every admissible relation between $m$ and
$p$.

\section{Prior work and provenance}\label{sec:prior}

Section~\ref{sec:prior} separates borrowed facts from results proved in the
present paper.

\medskip\noindent\textit{Exact beta product.}
Rouault~\cite{Rouault2007} gives the beta decomposition for the successive
Gram squared projection factors of independent uniform directions and the corresponding
determinant product.  Setting Rouault's ambient
dimension equal to $m$ and shifting the index gives exactly the product in
Proposition~\ref{prop:beta-product}.  Rowe and
Day~\cite{RoweDay2019} give an
equivalent exact factorization for Gaussian total correlation after
identifying Rowe and Day's uncentered Wishart degrees of freedom with the
residual degrees of freedom $m$ used here.  We also include a short Gram--Schmidt proof in
Appendix~\ref{app:beta}, so no unproved transfer between models is needed.

\medskip\noindent\textit{All-regime upper bound.}
The spherical variable of Heiny, Johnston, and
Prochno~\cite{HJP2022} is one half of the null log determinant plus a
constant.  Centering and standardization remove both changes.  Their paper
therefore gives the following exact specialization.

For $p\ge41$ and $m\ge p$, put $\theta=(p-1)/m$. Their result gives
\begin{equation}
 \DK(\Znull,\N(0,1))
 \le
 \frac{28\theta^2}
 {m(1-\theta)\{\log(1/(1-\theta))-\theta\}^{3/2}}.
 \label{eq:HJP}
\end{equation}
Appendix~\ref{app:beta} records only the variable and parameter identification
needed to quote this published bound; it does not reprove the inequality.
The result is a finite sample bound with an explicit constant. The upper bound in
\eqref{eq:HJP} has the correct orders in all regimes
listed in Table~\ref{tab:regime-rates}, but the number $28$ is not a sharp
first order constant.

\medskip\noindent\textit{Proportional Edgeworth literature.}
Xie and Sun~\cite{XieSun2021} give higher order Edgeworth expansions under
$p/m\to\gamma\in(0,1)$ with the same exact polygamma normalization, together
with a computable uniform error bound. Their first correction agrees with
\eqref{eq:edgeworth-philosophy}. The present full frequency argument proves the
endpoint uniform sharp Kolmogorov equivalent and matching lower bound stated in
Theorem~\ref{thm:null-edgeworth}.

\medskip\noindent\textit{General population correlation.}
Jiang proves qualitative central limit theorems for sample correlation
determinants under stated spectral and asymptotic conditions
\cite{Jiang2019}.  Zhao develops a coordinatewise Wiener chaos expansion of
the nonlinear diagonal normalization and derives the exact decomposition and
variance identities used as the starting point here~\cite{Zhao2026}.  For
completeness and to avoid depending on
unchecked intermediate steps, Appendices
\ref{app:general-decomp}--\ref{app:wishart} rederive every imported identity
used in Theorems~\ref{thm:general-leading} and \ref{thm:general-full}.

\medskip\noindent\textit{High dimensional correlation determinants.}
Null likelihood ratio calibrations, refinements, and moderate deviations are
developed in \cite{JiangYang2013,QiWangZhang2019,HuQi2023,BaiZhangLi2024}.
Linear spectral statistic methods for correlation matrices cover important
proportional or structurally regular regimes
\cite{GaoHanPanYang2017,MestreVallet2017,YinLiTianZheng2022,YinZhengZou2023,ChenZhengZou2026}.
Li allows unbounded population spectra at the level of limiting spectral
distributions and spikes \cite{Li2025}.  Complementary non-Gaussian results
for correlation log determinants and their null limits appear in
\cite{HeinyParolya2024,ParolyaHeinyKurowicka2024,LiPanXieZhou2024,LiLiuXieZhou2026}.
These results clarify which aspects of the present theorem depend on
Gaussian centering, all regime uniformity, or the arbitrary population
correlation matrix.

\medskip\noindent\textit{Adjacent normal approximation tools.}
Song~\cite{Song2020} gives uniform Edgeworth expansions for
independent arrays with positive definite covariance matrices, a uniform
bound on moments of order $s$, and a mean weak Cram\'er condition with common
parameters.  Derumigny, Girard, and
Guyonvarch~\cite{DGG2024} give explicit bounds to a first
Edgeworth approximation for standardized sums of independent centered real
variables with finite fourth moments.  Dal Borgo,
Hovhannisyan, and Rouault study mod-Gaussian convergence for random
determinants, including uniform Gram ensembles~\cite{DBHR2019}.  D\"oring,
Jansen, and Schubert survey cumulant normal approximation~\cite{DJS2022}.
Billingsley and Petrov provide standard background on weak convergence and
triangular array limit theory \cite{Billingsley1999,Petrov1995}.
The results cited in the preceding four sentences guide the proof but are not
cited as if the cited results already contained the all regime equivalent in
\eqref{eq:unified-equivalent}.

\medskip\noindent\textit{Covariance log determinants and spectral CLTs.}
The broader random matrix literature treats \(\log\det\) as the linear
spectral statistic associated with \(f(x)=\log x\). Bai and Silverstein
\cite{BaiSilverstein2004} prove a central limit theorem for smooth linear
spectral statistics of large dimensional sample covariance matrices in the
interior aspect ratio regime. Cai, Liang, and Zhou
\cite{CaiLiangZhou2015} specialize the covariance problem to the log
determinant and connect it to differential entropy estimation. These results
are important benchmarks, but a sample correlation matrix is obtained after
random diagonal normalization, and the logarithm becomes singular at the
hard spectral edge. The exact beta reduction used here is therefore not a
cosmetic alternative to a covariance matrix spectral CLT: it retains the
center and variance at finite \((m,p)\), reaches \(m=p\), and exposes the signed
third cumulant needed for the sharp Kolmogorov constant. Conversely, the
present theorem is not a replacement for a general linear spectral statistic
CLT, because it uses the Gaussian correlation determinant's special
factorization and global gamma product decay.

\subsection{Why the distinctions in Section~\ref{sec:prior} matter}

A nearby theorem is not automatically an exact source.  Four differences are
especially consequential in the present problem.  First, pointwise
convergence at each fixed $x$ does not control the supremum over $x$ in
\eqref{eq:kolmogorov}.  Second, an expansion for a fixed aspect ratio does not
cover a sequence whose aspect ratio approaches the hard endpoint.  Third, a
bound with the correct order need not identify the first order constant.
Fourth, approximate centering and scaling can be harmless for weak
convergence but harmful for a sharp Kolmogorov equivalent.

The Heiny--Johnston--Prochno paper and the Xie--Sun paper illustrate two
different forms of prior information.  The displayed HJP bound is a uniform
finite sample upper bound with an explicit constant after an exact variable
map.  The Xie--Sun paper supplies higher order information in the
proportional regime under a fixed interior limit~\cite{XieSun2021}. The present
paper complements these results with an endpoint uniform signed remainder and
sharp Kolmogorov equivalent.

The same discipline is used for the arbitrary correlation argument.  The
Wishart density, matrix gamma integral, Bartlett decomposition, and
partitioned Wishart result are credited to Muirhead~\cite{Muirhead1982}.  The
model specific analytic transform,
branch selection, derivative bound, and nonlinear perturbation are proved in
Appendices~\ref{app:general-decomp}--\ref{app:wishart}.  A citation to a
classical Wishart formula therefore does not conceal the new assembly needed
for Theorems~\ref{thm:general-leading} and \ref{thm:general-full}.

The accompanying Lean development formalizes the principal results and
supporting statements identified in Appendix~\ref{app:lean-ledger}.  The
audited paper correspondence is exact for Theorems~\ref{thm:general-leading}
and \ref{thm:general-full}, including the original sample bridges.  The other
crosswalk entries identify related kernel checked endpoints; the mathematical
proofs and the scope of each correspondence are stated in the appendices.

\section{The null theorem and proof strategy}\label{sec:null}

\subsection{The exact one dimensional reduction}

Put
\begin{equation}
 a_j=\frac{m-j+1}{2},\qquad M=\frac m2,\qquad
 d=m-p,\qquad a_*=\frac{d+1}{2}.
 \label{eq:basic-parameters}
\end{equation}

\begin{proposition}[Independent beta product]\label{prop:beta-product}
Under $R=I_p$,
\begin{equation}
 \det\widehat R\ \stackrel d=\ \prod_{j=2}^p B_j,
 \qquad
 B_j\ \hbox{independent},\quad
 B_j\sim\operatorname{Beta}\!\left(a_j,\frac{j-1}{2}\right).
 \label{eq:beta-product}
\end{equation}
\end{proposition}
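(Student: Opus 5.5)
The plan is to make rigorous the Gram--Schmidt picture sketched in Section~\ref{sec:background}. Let $H$ be an $n\times m$ matrix whose orthonormal columns span the orthogonal complement of the all ones vector. Write the data as an $n\times p$ matrix $X$ and put $Y=H^\top X$, an $m\times p$ matrix with $S=Y^\top Y$. Under $R=I_p$ the covariance $\Sigma$ is diagonal, and diagonal rescaling of the columns leaves $\widehat R$ unchanged, so we may take $\Sigma=I_p$. Then the entries of $Y$ are i.i.d.\ $N(0,1)$, because $H^\top H=I_m$ and $H^\top\mathbf 1=0$ removes the mean. Set $u_j=y_j/\|y_j\|$. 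These are i.i.d.\ uniform on the sphere $S^{m-1}$, since the columns of $Y$ are independent and rotation invariant. Moreover $\widehat R=U^\top U$ with $U=[u_1,\dots,u_p]$.

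Next apply Gram--Schmidt. Let $P_{j-1}$ be the orthogonal projection onto $\operatorname{span}(u_1,\dots,u_{j-1})$, and let $h_j^2=\|(I-P_{j-1})u_j\|^2$. The QR factorization $U=QT$ gives $\det(U^\top U)=\prod_j T_{jj}^2=\prod_{j=2}^p h_j^2$, since $h_1=1$. This is the base-times-height volume formula, and it holds almost surely because $m\ge p$ makes $U$ of full rank with probability one.

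Now identify the factor laws. Condition on $u_1,\dots,u_{j-1}$; almost surely they span a $(j-1)$-dimensional subspace $L$. The vector $u_j$ is independent of $L$ and uniform on the sphere. Write $u_j=g/\|g\|$ with $g\sim N_m(0,I_m)$ and choose coordinates adapted to $L$. Then
\[
h_j^2=\frac{\|g_{L^\perp}\|^2}{\|g_L\|^2+\|g_{L^\perp}\|^2}=\frac{\chi^2_{m-j+1}}{\chi^2_{m-j+1}+\chi^2_{j-1}},
\]
with the two chi-square variables independent. By the standard gamma--beta relation this ratio is $\operatorname{Beta}((m-j+1)/2,(j-1)/2)=\operatorname{Beta}(a_j,(j-1)/2)$. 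This conditional law does not depend on $(u_1,\dots,u_{j-1})$, so $h_j^2$ is independent of $\sigma(u_1,\dots,u_{j-1})$. Since $h_2,\dots,h_{j-1}$ are measurable with respect to that $\sigma$-field, induction on $j$ gives joint independence of $h_2^2,\dots,h_p^2$. This yields \eqref{eq:beta-product}.

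The main obstacle is bookkeeping rather than substance. Three points need care: the reduction from $n$ centered observations to $m$ i.i.d.\ Gaussian rows, the almost-sure full-rank condition at the endpoint $m=p$, and the measurability argument that turns conditional distributional invariance into joint independence. At $m=p$ the last factor has first parameter $a_p=1/2$, which is still a proper Beta law, so no degenerate case arises. For the independence step I would state the conditional characteristic function of $(h_2^2,\dots,h_j^2)$ and factor it inductively.
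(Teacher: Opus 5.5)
Your proposal is correct and follows essentially the same route as the paper's proof in Appendix~\ref{app:beta}: project the centered data onto an $m$-dimensional complement of $\mathbf 1$, normalize the columns to i.i.d.\ uniform directions on $S^{m-1}$, factor the Gram determinant by Gram--Schmidt, and identify each squared height conditionally as a ratio of independent $\chi^2$ variables, whose Beta law does not depend on the preceding directions and therefore gives independence. You are more explicit than the paper about three steps it leaves implicit: the reduction to $\Sigma=I_p$, almost sure full rank at $m=p$, and the measurability induction that turns conditional invariance into joint independence.
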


The proof is given in Appendix~\ref{app:beta}.  The geometric reason is
simple.  Add the standardized data directions one at
a time.  The determinant is the product of their squared projection factors
onto the orthogonal complements of the preceding spans. Rotational symmetry
makes each new factor beta distributed and independent of the preceding
factors. The appendix
also records the exact match with
Rouault~\cite[Proposition~2.1(2) and equation~(2.7)]{Rouault2007}.

The two beta parameters have a geometric interpretation.  The parameter
$a_j=(m-j+1)/2$ counts the orthogonal directions still available when the
$j$th standardized data column is added.  The second parameter $(j-1)/2$
counts directions already used by the preceding columns.  The sum of the two
parameters is the constant $M=m/2$.  Near the square endpoint,
$a_p=(d+1)/2$ stays small when $d=m-p$ is fixed; in a dilute design, every
$a_j$ is large.  The change in the final beta factors explains why the
cumulant sums have different
asymptotic appearances across Table~\ref{tab:regime-rates}.

Independence in Proposition~\ref{prop:beta-product} is stronger than a
factorization of the determinant's moments.  Independence permits the
logarithm of the determinant to be represented as a sum, so cumulants add
exactly.  The Mellin transform in \eqref{eq:beta-mellin} also remains analytic
on an explicit half plane determined by the smallest parameter $a_*$.  The
same parameter becomes the analytic frequency scale in the proof of
\eqref{eq:null-edgeworth}.  Geometry, cumulants, and Fourier decay are thus
controlled by the same endpoint quantity.

Let $L_{m,p}=\sum_{j=2}^p\log B_j$.  The beta integral gives, for
$\Re z>-a_j$,
\begin{equation}
 \E B_j^z=
 \frac{\Gamma(a_j+z)\Gamma(M)}{\Gamma(a_j)\Gamma(M+z)}.
 \label{eq:beta-mellin}
\end{equation}
Differentiating the analytic logarithm at zero yields
\begin{equation}
 \kappa_r(L_{m,p})
 =\sum_{j=2}^p\{\psi_{r-1}(a_j)-\psi_{r-1}(M)\},
 \qquad r\ge1.
 \label{eq:exact-cumulants}
\end{equation}
Thus the first two cumulants are \eqref{eq:null-mean} and
\eqref{eq:null-variance}.  Define the positive absolute third cumulant
$A_{m,p}$ and the standardized third cumulant magnitude $\lambda_{m,p}$ by
\begin{equation}
 A_{m,p}=\sum_{j=2}^p\{\psi_2(M)-\psi_2(a_j)\},\qquad
 \lambda_{m,p}=\frac{A_{m,p}}{V_{m,p}^{3/2}}.
 \label{eq:A-lambda}
\end{equation}

\subsection{The single sharp formula}

\begin{theorem}[Uniform first Edgeworth expansion]\label{thm:null-edgeworth}
As $p\to\infty$, uniformly over every integer $m\ge p$,
\begin{equation}
 \sup_{x\in\R}
 \left|\Pp(\Znull\le x)-\Phi(x)
 +\frac{A_{m,p}}{6V_{m,p}^{3/2}}(1-x^2)\phi(x)\right|
 =o(\lambda_{m,p}).
 \label{eq:null-edgeworth}
\end{equation}
Consequently,
\begin{equation}
 \DK(\Znull,\N(0,1))
 \sim\frac{A_{m,p}}{6\sqrt{2\pi}\,V_{m,p}^{3/2}}
 \label{eq:unified-equivalent}
\end{equation}
uniformly over $m\ge p$.
\end{theorem}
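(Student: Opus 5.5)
The proof works with the characteristic function of $\Znull$, which Proposition~\ref{prop:beta-product} and \eqref{eq:beta-mellin} give in closed form. Write $\sigma=\sqrt{V_{m,p}}$. Then
\[
\varphi(t)=\E e^{it\Znull}=e^{-itb_{m,p}/\sigma}\prod_{j=2}^p\frac{\Gamma(a_j+it/\sigma)\Gamma(M)}{\Gamma(a_j)\Gamma(M+it/\sigma)} .
\]
The target is the Fourier transform of the Edgeworth profile, $g(t)=e^{-t^2/2}\{1+(i t)^3\kappa_3/6\}$, where $\kappa_3=-\lambda_{m,p}$ is the standardized signed third cumulant coming from \eqref{eq:exact-cumulants}. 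The sign is negative because $\psi_2$ is increasing and $a_j<M$.

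I would apply Esseen's smoothing inequality at a cutoff $T=T_{m,p}$ chosen so that $\lambda_{m,p}T\to\infty$. It then suffices to show
\[
\int_{-T}^{T}\frac{|\varphi(t)-g(t)|}{|t|}\,dt=o(\lambda_{m,p})
\]
uniformly over $m\ge p$. Smoothing adds only an $O(1/T)=o(\lambda_{m,p})$ term, because the density of the profile is bounded.

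I would split the frequency range into three zones.

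\emph{Local zone, $|t|\le\varepsilon\sigma a_*$.} The cumulant series $\log\varphi(t)=-t^2/2+\sum_{r\ge3}\kappa_r(L_{m,p})(it)^r/(r!\sigma^r)$ converges there. The point is that $|\kappa_r(L_{m,p})|\le C_r\sum_j a_j^{1-r}$, which makes the quartic and higher part $O(t^4\kappa_4/\sigma^4)$. I would then prove the ratio bound $\kappa_4/\sigma^4=o(\lambda_{m,p})$ uniformly; in the square regime this ratio is about $1/(\log p)^2$, and $\lambda\asymp(\log p)^{-3/2}$. Together with the standard estimate $|e^{u}-1-u|\le |u|^2e^{|u|}/2$ and Gaussian damping $|\varphi(t)|\le e^{-ct^2}$, obtained from the same series (this is \eqref{eq:null-local-fourier}), this zone contributes $O(\kappa_4/\sigma^4+\lambda^2)=o(\lambda)$.

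\emph{Intermediate and high zone, $\varepsilon\sigma a_*<|t|\le T$.} Here I would use the gamma modulus identity $|\Gamma(a+iy)/\Gamma(a)|^2=\prod_{k\ge0}\{1+y^2/(a+k)^2\}^{-1}$ for each factor, together with the fact that $|\Gamma(M)/\Gamma(M+iy)|$ is bounded by the analogous product. The result is $|\E B_j^{iy}|^2\le\prod_{0\le k<(j-1)/2}\{1+y^2/(a_j+k)^2\}^{-1}$, where the product runs over half-integer steps suitably interpreted. Summing logarithms over $j$ gives $|\varphi(t)|\le\exp\{-c\min(t^2,\ \sigma^2a_*^2\log(1+t^2/(\sigma a_*)^2))\}$, and additionally a power decay $|\varphi(t)|\le(1+|t|/\sigma)^{-cp}$ (this is \eqref{eq:null-tail-fourier}). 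These bounds make the integral of $|\varphi|/|t|$ over the zone super-polynomially small relative to $\lambda$. The Gaussian part $|g|/|t|$ is trivially negligible there.

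The consequence \eqref{eq:unified-equivalent} follows directly. The profile $(1-x^2)\phi(x)$ has supremum $1/\sqrt{2\pi}$, attained at $x=0$, so the lower bound is read off at $x=0$ and the upper bound from the sup norm, each up to $o(\lambda)$. The remaining point is that $\lambda_{m,p}\to0$ uniformly; this holds because $V_{m,p}\to\infty$ and $A_{m,p}\lesssim\sum_j a_j^{-2}\lesssim V_{m,p}\cdot\max_j a_j^{-1}$.

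The main obstacle I expect is the square and small-gap regime, $a_*=O(1)$. There the local zone extends only to $|t|\lesssim\sigma$, where $\sigma\sim\sqrt{\log p}$, so the cutoff between the zones is not at a large frequency. This means the matching has to be quantitative. The Gaussian-type decay in the middle zone must hold with a constant good enough that $\int_{\varepsilon\sigma}^{T}|\varphi|/t\,dt=o((\log p)^{-3/2})$, with $T$ of order a power of $\log p$. I would first try to extract from the gamma product a bound of the form $\log|\varphi(t)|\le-c\,\sigma^{-2}t^2\sum_j a_j^{-1}\min(1,t^2/(\sigma^2a_j^2))$, which behaves like $-c\log p$ once $|t|\gtrsim\sigma$. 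Such a bound yields a polynomial-in-$p$ decay, and that decay comfortably beats $(\log p)^{-3/2}$.
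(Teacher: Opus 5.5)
Your route is essentially the paper's: the same exact characteristic function, a cumulant expansion on $|t|\le\varepsilon\Delta_{m,p}$ with $\Delta_{m,p}=a_*\sqrt{V_{m,p}}$, gamma-modulus products beyond that range, comparison with $e^{-t^2/2}(1+i\lambda_{m,p}t^3/6)$, and evaluation at $x=0$ for the lower bound. The structural difference is that you use Esseen's smoothing inequality with a cutoff $T\gg\lambda_{m,p}^{-1}$, whereas the paper integrates over all frequencies with its cutoff-free signed inversion lemma. Your choice is legitimate, because the paper's objection is only to a cutoff at the analytic scale $\sqrt{V_{m,p}}$.

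Your middle-zone bound is correct; the minimum with $t^2$ is redundant. With $n_j=\lfloor (j-1)/2\rfloor$, monotonicity of $x\mapsto\sum_k\log(1+u^2/(x+k)^2)$ gives the truncated product. Concavity gives $\log(1+u^2/z^2)\ge (a_*/z)^2\log(1+u^2/a_*^2)$ for $z\ge a_*$, and the floors cost only a constant fraction of $V_{m,p}$. Together these give $|\varphi(t)|\le(1+t^2/\Delta_{m,p}^2)^{-c\Delta_{m,p}^2}$. That bound alone controls all of $|t|>\varepsilon\Delta_{m,p}$, so you could drop the cutoff entirely.

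Several supporting claims are false as written and must be replaced.

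(i) The bound $|\varphi(t)|\le(1+|t|/\sigma)^{-cp}$ fails. Since $\log(1+s_1)-\log(1+s_2)\le s_1-s_2$, one has $-\log|\varphi(t)|^2\le t^2$, so $|\varphi(\sigma)|\ge e^{-V_{m,p}/2}$. But $V_{m,p}\le 2\log p+C$, so $|\varphi(\sigma)|$ is of order at least $1/p$, not $2^{-cp}$. The paper's order-$p^2$ power decay is stated only for $|u|\ge M$. You do not need this bound, so delete it.

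(ii) Your proof that $\lambda_{m,p}\to0$ uniformly is wrong on two counts. First, $V_{m,p}\not\to\infty$: it stays bounded when $p/m\to\gamma<1$ and is $\sim p^2/m^2\to0$ when $p/m\to0$. Second, $\sum_j a_j^{-2}\lesssim V_{m,p}/a_*$ fails when $p/m\to0$: the left side is $\asymp p/m^2$ and the right side is $\asymp p^2/m^3$.

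(iii) Likewise, $|\kappa_r|\le C_r\sum_j a_j^{1-r}$ discards the cancellation against $\psi_{r-1}(M)$. In the dilute regime it exceeds $|\kappa_4|$ by a factor $\asymp m/p$, so it cannot yield your claimed $O(t^4\kappa_4/\sigma^4)$ remainder.

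The fix for (ii) and (iii) is the differencewise comparison behind (B.2): for $a_*\le x<y$ and $r>q$,
$x^{-r}-y^{-r}\le \tfrac{r}{q}x^{q-r}(x^{-q}-y^{-q})$. Applied termwise in the polygamma series, this gives:
\begin{itemize}
\item $A_{m,p}\le 3V_{m,p}/a_*$, hence $\lambda_{m,p}\le 3/\Delta_{m,p}$;
\item $|\kappa_r|\le \tfrac{r!}{24}a_*^{4-r}|\kappa_4|$;
\item $|\kappa_4|/V_{m,p}^2\le 4\lambda_{m,p}/\Delta_{m,p}$.
\end{itemize}
The real input you still must supply is $\Delta_{m,p}\to\infty$ uniformly over $m\ge p$, together with $e^{-c\Delta_{m,p}^2}=o(\lambda_{m,p})$; the paper obtains these from the sums in Appendix~\ref{app:asymptotics}.

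(iv) In your last paragraph the minimum is reversed. At $m=p$ and $|u|\asymp1$, $u^2\sum_j a_j^{-1}\min(1,u^2/a_j^2)=O(1)$, not $\asymp\log p$. The $\log p$ comes from the indices with $a_j\gg|u|$, which your factor suppresses. This point is moot, since your concavity bound already gives decay $p^{-c}$ at the edge of the local zone.
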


The proof is given in Appendix~\ref{app:fourier}, using the exact reduction
from Proposition~\ref{prop:beta-product} and Appendix~\ref{app:beta}.
Here ``uniformly'' means that the ratio in \eqref{eq:unified-equivalent}
tends to one along every sequence $p\to\infty$ with $m=m_p\ge p$.
Equation~\eqref{eq:unified-equivalent} is the unified finite parameter
equivalent: its right side is a finite
sum of standard special functions and does not require knowing the limiting
regime.

Why does the last implication give both sides?  The elementary function
$\abs{(1-x^2)\phi(x)}$ has maximum $\phi(0)=1/\sqrt{2\pi}$.  Equation
\eqref{eq:null-edgeworth} therefore gives the upper bound.  Evaluating the
signed expansion in \eqref{eq:null-edgeworth} at $x=0$ gives the matching
lower bound.  The derivative check is included in Appendix~\ref{app:fourier}.

\subsection{What uniform sharpness asserts}

The remainder denoted by $o(\lambda_{m,p})$ in \eqref{eq:null-edgeworth} is uniform over the discrete
set of all pairs $(m,p)$ with $m\ge p$.  Equivalently, for every admissible
sequence $(m_p,p)$, the remainder divided by $\lambda_{m_p,p}$ tends to zero.
No limit of $p/m_p$ and no limit of $m_p-p$ is assumed.  The formulation is
what allows a later supremum over $m$ in Corollary
\ref{cor:square-supremum}.

The adjective ``sharp'' has two components.  The rate is sharp because the
Kolmogorov distance divided by the right side of
\eqref{eq:unified-equivalent} tends to one.  The constant is sharp because
the maximum of the signed Edgeworth profile is known and is attained at a
specific continuity point.  An $O(\lambda_{m,p})$ upper bound would provide
neither component: the hidden constant could be larger, and no lower bound
would be available.

Equation~\eqref{eq:null-edgeworth} is a stronger statement than
\eqref{eq:unified-equivalent}.  Equation~\eqref{eq:null-edgeworth} identifies
the entire leading CDF error as a function of $x$, whereas
\eqref{eq:unified-equivalent} retains only the largest absolute discrepancy.
The stronger signed statement is useful because the point producing the
lower bound can be chosen before the asymptotic limit.  The argument avoids
having to prove convergence of unknown maximizers of the exact CDF error.

Theorem~\ref{thm:null-edgeworth} also clarifies the role of the published bound in
\eqref{eq:HJP}.  Equation~\eqref{eq:HJP} is nonasymptotic and has a fully
explicit constant for $p\ge41$.  Equation~\eqref{eq:unified-equivalent} is an
asymptotic equivalence with the exact leading constant.  The two statements
answer different questions, and neither \eqref{eq:HJP} nor
\eqref{eq:unified-equivalent} should be described as a replacement for the
other formula.

\subsection{Why the full frequency range is unavoidable}

At low frequencies, cumulant bounds provide a controlled Taylor expansion of
the logarithm of the characteristic function.  If the proof stopped at a
frequency cutoff, a standard smoothing inequality would add a term inverse
to the frequency cutoff.  At $m=p$, the natural analytic cutoff is only of order
$\sqrt{V_{p,p}}$, and Proposition~\ref{prop:asymptotics} gives
$V_{p,p}\asymp\log p$.  The resulting cutoff loss is larger than the target
in Corollary~\ref{cor:square-supremum}.

The beta product resolves this mismatch because modulus identities for the
gamma function control each factor beyond the Taylor disk.  Multiplying the factorwise
decay yields a middle frequency Gaussian bound and a high frequency power
bound whose exponent grows with the number of factors.  The signed inversion
lemma in Appendix~\ref{app:fourier} then integrates the actual difference
between the characteristic function and the Edgeworth comparator.  No
standalone cutoff error remains.  The proof is model specific, but the proof
also explains precisely what a future reusable Lean Edgeworth library would
have to expose.

\subsection{Eight step proof roadmap}

The technical proof is in Appendices~\ref{app:cumulants}--\ref{app:asymptotics}.
The logic of the technical proof is worth seeing before the estimates.

\begin{enumerate}[leftmargin=2.2em]
 \item The determinant becomes the independent product
 \eqref{eq:beta-product}.
 \item Taking logarithms turns the product into a sum and gives every
 cumulant through \eqref{eq:exact-cumulants}.
 \item The third standardized cumulant is $-\lambda_{m,p}$.  All later
 cumulants are smaller by powers of an analytic scale
 $\Delta_{m,p}=a_*\sqrt{V_{m,p}}$.
 \item Near frequency zero, equation~\eqref{eq:null-local-log} expands the
 logarithmic characteristic function into its Gaussian quadratic term, cubic
 skewness term, and uniform remainder; equation~\eqref{eq:null-local-fourier}
 gives the corresponding characteristic function approximation.
 \item A truncated Esseen inequality is not sharp at the square endpoint:
 the cutoff loss in the truncated inequality is of order $(\log p)^{-1/2}$,
 while the desired answer is
 of order $(\log p)^{-3/2}$.
 \item Equation~\eqref{eq:gamma-modulus-product} yields Gaussian damping up to
 the analytic radius and increasing order polynomial decay beyond it;
 equation~\eqref{eq:null-tail-fourier} gives the resulting high frequency
 tail estimate.
 \item Combining the local and tail Fourier bounds
 \eqref{eq:null-local-fourier} and \eqref{eq:null-tail-fourier} in the full
 inversion step yields the uniform CDF expansion
 \eqref{eq:null-edgeworth}.
 \item Finally, equations~\eqref{eq:A-uniform}--\eqref{eq:V-square}
 evaluate the polygamma sums; substitution into
 \eqref{eq:unified-equivalent} yields the gap regime formulas.
\end{enumerate}

The cutoff comparison also explains why an ordinary absolute third moment
Berry--Esseen inequality is not sharp here: an absolute moment inequality
ignores the signed cancellation used in Steps 3 and 4.

\section{Arbitrary population correlation}\label{sec:generalR}

We now allow any positive definite correlation matrix $R$.  No lower bound on
the smallest eigenvalue of $R$ and no upper bound on the largest eigenvalue of $R$ are
assumed.  The main lesson is a separation: one term comes from the null
log determinant geometry, one term measures population correlation, and one
term is the price of controlling a nonlinear remainder by only the variance
of the remainder.

The absence of a condition number hypothesis does not mean that the spectrum
of $R$ is irrelevant.  Population dependence is summarized by trace
functionals that remain meaningful even when individual eigenvalues approach
the boundary of positive definiteness.  The quadratic energy
$a_R=\tr\{(R-I_p)^2\}$ enters the approximating variance in
\eqref{eq:general-scale}.  The absolute cubic trace in \eqref{eq:rho-Q}
controls the third derivative of the Wishart transform.  The distinction
between the two trace functionals mirrors the distinction between variance
and skewness.

The argument for arbitrary $R$ cannot use the independent beta factors of
Proposition~\ref{prop:beta-product}.  Instead, the proof starts from one white
Wishart matrix and keeps all dependence on the same probability space.
Following the Wiener chaos decomposition in Zhao~\cite{Zhao2026}, the leading
variable $M_R$ is chosen so that the exact matrix gamma transform can be
differentiated.  The remainder $E_R$ is chosen so that the first
nonvanishing Gaussian chaos contribution has sufficiently high degree for an
$L^2$ bound.  Equation~\eqref{eq:general-decomposition} is therefore both an
algebraic identity and the organizing decision for the proof.

\subsection{An exact leading term and a nonlinear remainder}

Let $W_0\sim W_p(m,I_p)$ be a white Wishart matrix. On this canonical
probability space define
\[
 S_R^{\mathrm{can}}=R^{1/2}W_0R^{1/2},\qquad
 \widehat R^{\mathrm{can}}
 =\diag(S_R^{\mathrm{can}})^{-1/2}S_R^{\mathrm{can}}
  \diag(S_R^{\mathrm{can}})^{-1/2}.
\]
The Gaussian projection reduction proved at the start of
Appendix~\ref{app:general-decomp} states that the centered scatter matrix,
after removing marginal scales, has law
$R^{1/2}W_0R^{1/2}=S_R^{\mathrm{can}}$. Applying correlation normalization
therefore gives $\widehat R^{\mathrm{can}}\stackrel d=\widehat R$.
For $i=1,\ldots,p$, set
\begin{equation}
 g_i=\frac{(S_R^{\mathrm{can}})_{ii}-m}{m},\qquad
 e_m(g)=\log(1+g)-g-\E\{\log(1+g)-g\}.
 \label{eq:g-e}
\end{equation}
Define
\begin{align}
 M_R&=\log\det W_0-\E\log\det W_0
      -\frac{\tr(RW_0)-mp}{m},\label{eq:MR-definition}\\
 E_R&=\sum_{i=1}^p e_m(g_i).\label{eq:ER-definition}
\end{align}
Then direct determinant algebra gives the exact centered decomposition
\begin{equation}
 \log\det\widehat R^{\mathrm{can}}-\log\det R-b_{m,p}=M_R-E_R
 \quad\text{almost surely}.
 \label{eq:general-decomposition}
\end{equation}
Equivalently, the left side with the original sample matrix
$\widehat R$ is equal in distribution to $M_R-E_R$.
The reason for subtracting the linear fluctuation $g_i$ is educational as
well as technical: subtraction of $g_i$ removes the second Gaussian chaos of
each logarithm.
The remainder begins at degree four, which gives
\begin{equation}
 \E E_R^2\le\frac{4(p+a_R)}{m^2}.
 \label{eq:ER-L2}
\end{equation}
Appendix~\ref{app:general-decomp} proves both statements from Gaussian
coordinates and a Hermite expansion.

The subtraction in the definition of $e_m$ deserves emphasis.  Expanding
$\log(1+g)$ begins with a linear term.  The same linear term is incorporated
into $M_R$, leaving a centered nonlinear expression in $E_R$.  Gaussian
orthogonal projection then removes the lower chaos that would otherwise have
variance of the same order as the leading term.  The variance estimate in
\eqref{eq:ER-L2} reflects the improved order.  Using only a pointwise Taylor
bound for the logarithm would be unsafe because the random diagonal
fluctuation is not deterministically bounded away from $-1$.

The exact identity in \eqref{eq:general-decomposition} also protects the
normalization.  The mean of the full log determinant is not approximated
before the perturbation is analyzed: the exact null center and
$\log\det R$ are removed, and both $M_R$ and $E_R$ are centered.  The later
Kolmogorov comparison therefore has no hidden deterministic shift.

The variance of the leading term is
\begin{equation}
 w_R^2:=\Var(M_R)
 =s_R^2+p\left\{\psi_1(m/2)-\frac2m\right\}.
 \label{eq:wR}
\end{equation}
The elementary trigamma bounds in Appendix~\ref{app:cumulants} imply
\begin{equation}
 0\le\frac{w_R^2-s_R^2}{s_R^2}\le\frac4{p-1}.
 \label{eq:scale-mismatch}
\end{equation}

\subsection{A unified bound}

For a symmetric matrix $A$, write
$\tr\abs A^3=\sum_i\abs{\lambda_i(A)}^3$.  Define
\begin{equation}
 \rho_R=\frac{\tr\abs{R-I_p}^3}{m^2s_R^3},\qquad
 Q_R=\frac{4(p+a_R)}{m^2s_R^2}.
 \label{eq:rho-Q}
\end{equation}

\begin{theorem}[General correlation leading term]\label{thm:general-leading}
There is a universal constant $C$ such that, for $p\ge2$, $m\ge p$, and
every positive definite correlation matrix $R$,
\begin{equation}
 \DK(M_R/s_R,\N(0,1))
 \le C\left\{\lambda_{m,p}+\frac1p+\rho_R\right\}.
 \label{eq:general-leading-bound}
\end{equation}
Moreover,
\begin{equation}
 \rho_R\le\frac{1}{2^{3/2}\sqrt m}.
 \label{eq:rho-simple}
\end{equation}
\end{theorem}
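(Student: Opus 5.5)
The inequality \eqref{eq:rho-simple} is elementary, so I would dispose of it first. Since $\tr\abs{A_R}^3=\sum_i\abs{\lambda_i(A_R)}^3\le(\sum_i\lambda_i(A_R)^2)^{3/2}=a_R^{3/2}$, and \eqref{eq:general-scale} gives $s_R^2\ge 2a_R/m$, we get
\[
\rho_R\le\frac{a_R^{3/2}}{m^2(2a_R/m)^{3/2}}=\frac{1}{2^{3/2}\sqrt m}
\]
when $a_R>0$. When $R=I_p$ we have $\rho_R=0$. The real work is the Kolmogorov bound \eqref{eq:general-leading-bound}. I would prove it by an exact characteristic function computation followed by the same full-frequency Fourier inversion used for Theorem~\ref{thm:null-edgeworth}, now only as an upper bound.

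\emph{Step 1: exact transform.} The matrix gamma integral (Muirhead, Theorem~2.1.11) and the Wishart density give, for $\Re(m/2+z)>(p-1)/2$ and $I-2tR$ in the admissible domain,
\[
\E\bigl[\det(W_0)^z e^{t\tr(RW_0)}\bigr]=2^{pz}\frac{\Gamma_p(m/2+z)}{\Gamma_p(m/2)}\det(I-2tR)^{-(m/2+z)} .
\]
I would take $z=iu/s_R$ and $t=-iu/(ms_R)$, with the principal branch of $\log\det(I+2iuR/(ms_R))$ defined through the eigenvalues of $R$. This gives $\varphi(u)=\E e^{iuM_R/s_R}$ in closed form. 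The factor $\Gamma_p(m/2+z)/\Gamma_p(m/2)$ equals $\prod_{j=1}^p\Gamma(a_j+\tfrac12+z)/\Gamma(a_j+\tfrac12)$ in the notation \eqref{eq:basic-parameters}. Hence the log-determinant part behaves like a sum of independent log-gamma variables, and its cumulants are polygamma sums of the same type as \eqref{eq:exact-cumulants}.

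\emph{Step 2: local expansion.} I would expand $-(m/2+z)\tr\log(I-2tR)$ in powers of $u$, writing $R=I+A_R$ with $\tr A_R=0$.
\begin{itemize}
\item The linear terms cancel by the centering in \eqref{eq:MR-definition}.
\item The quadratic term reproduces $w_R^2/s_R^2$ from \eqref{eq:wR}, which lies in $[1,1+4/(p-1)]$ by \eqref{eq:scale-mismatch}.
\item The cubic terms split into three pieces. The gamma piece is $O(\lambda_{m,p})$, after comparing $\sum_j\psi_2(a_j+\tfrac12)$ with $A_{m,p}$ via the trigamma and tetragamma bounds of Appendix~\ref{app:cumulants}; the $j=1$ factor and the shift by $\tfrac12$ cost only $O(1/p)$ relative terms. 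The pure trace piece involves $\tr R^3/(m^2s_R^3)$, and expanding it in $A_R$ leaves, after the $I$-parts combine with the gamma piece, the term $\tr A_R^3/(m^2s_R^3)\le\rho_R$. The cross term between $z$ and $t$ is bounded by $p/(m^2s_R^3)$, which I expect to be absorbed into $\lambda_{m,p}+1/p$.
\item Higher-order terms are bounded as in step~3 of the null roadmap, by powers of the analytic scale $\Delta_{m,p}$ and of $\|A_R\|/(ms_R)$.
\end{itemize}
The outcome should be $\abs{\varphi(u)-e^{-u^2/2}}\le C(\lambda_{m,p}+1/p+\rho_R)(\abs u^3+u^2)e^{-u^2/4}$ for $\abs u\le c\Delta_{m,p}$. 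The additive $1/p$ in the final bound comes from the variance mismatch $w_R\ne s_R$.

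\emph{Step 3: tails, the main obstacle.} A truncated Esseen inequality with cutoff of order $\Delta_{m,p}$ is not enough at the square endpoint, by the discussion in Section~\ref{sec:null}. I therefore need decay of $\abs{\varphi(u)}$ beyond the Taylor disk. The modulus is
\[
\abs{\varphi(u)}=\prod_j\Bigl|\frac{\Gamma(a_j+\tfrac12+iu/s_R)}{\Gamma(a_j+\tfrac12)}\Bigr|\;\bigl|\det(I+iB)\bigr|^{-m/2}\exp\Bigl\{\frac u{s_R}\sum_k\arctan b_k\Bigr\},
\]
where $B=2uR/(ms_R)$ and the $b_k$ are its eigenvalues. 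The last factor can exceed one: since $\tr R=p$, its exponent is at most $2u^2p/(ms_R^2)$. This growth has to be beaten jointly by the gamma-modulus product \eqref{eq:gamma-modulus-product} and by $\prod_k(1+b_k^2)^{-m/4}$.

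My first attempt would pair, in each eigendirection $k$, the factor $(1+b_k^2)^{-m/4}e^{(u/s_R)\arctan b_k}$ with a $1/p$ share of the gamma decay. For $\abs{b_k}\lesssim1$ I would use $\arctan b\le b$ together with $\log(1+b^2)\ge b^2/2$. For large $\abs{b_k}$, $\arctan$ is bounded by $\pi/2$, so the growth is only $e^{\pi u/(2s_R)}$ per direction, which the gamma factor $\exp\{-\pi\abs u/(2s_R)\}$ from Stirling exactly compensates. From this I expect Gaussian damping $e^{-cu^2}$ up to frequencies of order $\Delta_{m,p}$ and power decay beyond, as in \eqref{eq:null-tail-fourier}.

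\emph{Step 4: inversion.} The signed inversion lemma of Appendix~\ref{app:fourier}, applied with comparator $e^{-u^2/2}$ (no Edgeworth term is needed for an upper bound), integrates the local estimate to $C(\lambda_{m,p}+1/p+\rho_R)$ and the tail to a smaller quantity. This gives \eqref{eq:general-leading-bound}.
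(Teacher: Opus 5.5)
Your proof of \eqref{eq:rho-simple} is correct and is the paper's. The gap is Step~3, on which your route to \eqref{eq:general-leading-bound} rests. It is only conjectured, and the inequalities you propose for it fail away from the hard edge.

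Take $R=I_p$, $v=u/s_R$ and $b=2v/m$. Your bounds $\arctan b\le b$ and $\log(1+b^2)\ge b^2/2$ majorize the trace factor by $\exp\{3pv^2/(2m)\}$. On the other hand, \eqref{eq:gamma-modulus-product} together with $\log(1+x)\le x$ shows that the gamma modulus is never below $\exp\{-\tfrac{v^2}{2}\sum_{j=1}^p\psi_1(a_j)\}$, and $\tfrac12\sum_{j=1}^p\psi_1(a_j)\le\log\tfrac{m}{m-p}+O(p/(m-p)^2)$. Your majorant of $\abs{\varphi}$ is therefore at least $\exp\{v^2(\tfrac{3p}{2m}-\log\tfrac{m}{m-p}-o(p/m))\}$. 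This exceeds one throughout the range $\abs b\lesssim1$ whenever $p/m$ stays below the root ($\approx0.58$) of $\tfrac32\gamma=-\log(1-\gamma)$, and in particular in the whole dilute regime.

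The failure is structural. The true damping $e^{-v^2V_{m,p}/2}$ is the residue of an exact cancellation between terms of size $v^2p/m$, and $V_{m,p}\asymp(p/m)^2\ll p/m$ when $p/m\to0$. Any factorwise pairing that loses a constant destroys it. The Stirling remark does not help for large $\abs{b_k}$ either. After $e^{\pm\pi\abs v/2}$ cancel, each gamma factor still carries $\sqrt{2\pi}\abs v^{a_j-1/2}/\Gamma(a_j)$, which is large for $\abs v\gg a_j$; and small eigenvalues of $R$, which are not excluded, push the large-$\abs{b_k}$ range out to enormous $\abs v$.

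The missing idea, which is the paper's route, is that you need no control beyond $\abs t\asymp1/\epsilon$, where $\epsilon=\lambda_{m,p}+1/p+\rho_R$.
\begin{itemize}
\item Full-frequency inversion was forced in Theorem~\ref{thm:null-edgeworth} only by the $o(\lambda_{m,p})$ remainder. For an upper bound, Feller's smoothing inequality with cutoff $T\asymp1/\epsilon$ loses only $O(\epsilon)$.
\item You are right that $1/\epsilon\gg\Delta_{m,p}$ at the hard edge. The way past $\Delta_{m,p}$, however, is the global bound (E.2), $\sup_{t\in\R}\abs{K_R'''(t)}\le C\epsilon$ with $K_R=\log\varphi$, not a tail estimate.
\item (E.2) holds because on the imaginary axis $\abs{\psi_2(a+iy)-\psi_2(M+iy)}\le\psi_2(M)-\psi_2(a)$, termwise from \eqref{eq:polygamma-series}, and because $\abs{M+i\lambda_ky}\ge M$.
\item Consequently $\abs{K_R(t)+w_R^2t^2/(2s_R^2)}\le C\epsilon\abs t^3$ for every real $t$. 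This gives Gaussian damping and $\abs{\varphi(t)-e^{-t^2/2}}\le C\epsilon(\abs t^3+t^2)e^{-t^2/4}$ on $\abs t\le c/\epsilon$, so Step~3 disappears.
\end{itemize}

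Step~2 also needs two corrections.
\begin{itemize}
\item There is no half shift: $\Gamma_p(m/2+z)/\Gamma_p(m/2)=\prod_{j=1}^p\Gamma(a_j+z)/\Gamma(a_j)$. A shift would not be an $O(1/p)$ effect anyway, since at the hard edge it replaces $\psi_2(1/2)$ by $\psi_2(1)$.
\item Neither the gamma piece nor the cross term is separately $O(\lambda_{m,p}+1/p)$. For $R=I_p$ and $p/m\to0$ one has $p/(m^2s_R^3)\asymp m/p^2\gg1/p$. The three $I_p$-parts must be combined: $p\psi_2(M)$ from the gamma piece, $-2p/M^2$ from the pure trace piece and $+3p/M^2$ from the cross piece. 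Together they give $p\{\psi_2(M)+M^{-2}\}=O(p/M^3)$.
\item What remains is $-A_{m,p}-(3a_R+2\tr A_R^3)/M^2$. It is at most $C\epsilon s_R^3$ because $s_R^2\ge2a_R/m$ and $ms_R\ge\sqrt{p(p-1)}$. The same split holds uniformly in $y$, which is exactly what (E.2) requires.
\end{itemize}
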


The proof is given in Appendix~\ref{app:wishart}, with the normalization
inequalities collected in Appendix~\ref{app:auxiliary}.  It uses the exact Wishart moment
transform, differentiates the analytic logarithm of the transform three times, and applies
a smoothing lemma proved here.  The term $\rho_R$ can be of order
$m^{-1/2}$, so the theorem does not assert a universal $p^{-1}$ rate for the
leading term.

\begin{theorem}[General correlation full statistic]\label{thm:general-full}
Under the conditions of Theorem~\ref{thm:general-leading}, a universal $C$
satisfies
\begin{equation}
 \DK(\ZR,\N(0,1))
 \le C\left\{
 \lambda_{m,p}+\frac1p+\rho_R+Q_R^{1/3}\right\}.
 \label{eq:general-full-bound}
\end{equation}
In particular,
\begin{equation}
 \DK(\ZR,\N(0,1))\le C\{\lambda_{m,p}+p^{-1/3}\}.
 \label{eq:general-simple-bound}
\end{equation}
\end{theorem}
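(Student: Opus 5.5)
The plan is to transfer the bound for the leading term in Theorem~\ref{thm:general-leading} to the full statistic by a perturbation argument. The transfer rests on the exact decomposition \eqref{eq:general-decomposition} and the $L^2$ remainder bound \eqref{eq:ER-L2}. Since $\widehat R^{\mathrm{can}}\stackrel d=\widehat R$, we have $\ZR\stackrel d=(M_R-E_R)/s_R$. It therefore suffices to bound $\DK((M_R-E_R)/s_R,\N(0,1))$ on the canonical probability space.

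The key steps are as follows. For any $\varepsilon>0$ and $x\in\R$ we have the inclusions
\[
\{M_R/s_R\le x-\varepsilon\}\setminus\{\abs{E_R}/s_R>\varepsilon\}
\subseteq\{(M_R-E_R)/s_R\le x\}
\subseteq\{M_R/s_R\le x+\varepsilon\}\cup\{\abs{E_R}/s_R>\varepsilon\}.
\]
From these inclusions we get
\[
\abs{\Pp(\ZR\le x)-\Phi(x)}\le \DK(M_R/s_R,\N(0,1))+\sup_x\abs{\Phi(x\pm\varepsilon)-\Phi(x)}+\Pp(\abs{E_R}>\varepsilon s_R).
\]
The middle term is at most $\varepsilon/\sqrt{2\pi}$. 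By Chebyshev's inequality and \eqref{eq:ER-L2}, the last term is at most $\E E_R^2/(\varepsilon^2 s_R^2)\le Q_R/\varepsilon^2$. We then choose $\varepsilon=Q_R^{1/3}$, which balances $\varepsilon$ against $Q_R\varepsilon^{-2}$ and gives a contribution of order $Q_R^{1/3}$. Adding \eqref{eq:general-leading-bound} yields \eqref{eq:general-full-bound}.

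For the simplified bound \eqref{eq:general-simple-bound}, we absorb each extra term into $\lambda_{m,p}+p^{-1/3}$.
\begin{itemize}
\item The term $1/p$ satisfies $1/p\le p^{-1/3}$.
\item For $\rho_R$, \eqref{eq:rho-simple} gives $\rho_R\le C m^{-1/2}\le Cp^{-1/2}\le Cp^{-1/3}$, since $m\ge p$.
\item For $Q_R$, we use $s_R^2=V_{m,p}+2a_R/m$ to write
\[
Q_R=\frac{4p}{m^2s_R^2}+\frac{4a_R}{m^2s_R^2}.
\]
The second piece is at most $4a_R/(m^2\cdot 2a_R/m)=2/m\le 2/p$.
\item For the first piece we need a lower bound $V_{m,p}\gtrsim p^2/m^2$-type, or at least $V_{m,p}\ge c\,p/m^2\cdot p$. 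The trigamma bounds from Appendix~\ref{app:cumulants} should give this. Each summand $\psi_1(a_j)-\psi_1(M)$ is at least of order $(M-a_j)/a_jM\gtrsim (j-1)/m^2$, so $V_{m,p}\gtrsim p^2/m^2$. Then $4p/(m^2 s_R^2)\lesssim 4p/p^2=O(1/p)$.
\end{itemize}
Hence $Q_R=O(1/p)$ and $Q_R^{1/3}=O(p^{-1/3})$, which completes \eqref{eq:general-simple-bound}.

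The main obstacle is already absorbed in Theorem~\ref{thm:general-leading} and \eqref{eq:ER-L2}. What remains is routine except for the lower bound on $V_{m,p}$ used to control $p/(m^2 s_R^2)$. I would verify that bound first via the integral representation $\psi_1(x)=\sum_{k\ge0}(x+k)^{-2}$. This representation gives $\psi_1(a_j)-\psi_1(M)\ge (M-a_j)/\{(a_j+1)(M+1)\}$ or a similar elementary inequality, uniformly over $m\ge p$ including the square endpoint. Only the universal form of $C$ needs care there.
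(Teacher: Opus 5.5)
Your proposal is correct and follows the paper's proof essentially step for step. The identity $\ZR\stackrel d=(M_R-E_R)/s_R$, your anticoncentration/Chebyshev sandwich with $\varepsilon=Q_R^{1/3}$ (which is \eqref{eq:general-perturbation}), and Theorem~\ref{thm:general-leading} give \eqref{eq:general-full-bound}; for \eqref{eq:general-simple-bound}, your bound $\psi_1(a_j)-\psi_1(M)\ge (M-a_j)/(a_jM)\ge 2(j-1)/m^2$ (from integrating $-\psi_2(x)\ge x^{-2}$) yields $V_{m,p}\ge p(p-1)/m^2$ and hence the paper's (F.6), $Q_R\le 4/(p-1)+2/m$, which together with \eqref{eq:rho-simple} and $m\ge p$ absorbs every term into $C\{\lambda_{m,p}+p^{-1/3}\}$ without the trace constraint or case split the paper mentions.
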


The proof is given in Appendices~\ref{app:general-decomp}
and~\ref{app:wishart}.  The cube root is a proved loss, not a conjectured
optimal rate.  The basic
perturbation argument balances a normal anticoncentration term of size
$\varepsilon$ against a Chebyshev term of size $Q_R/\varepsilon^2$.  Better
control would have to use the dependence between $M_R$ and $E_R$, which the
anticoncentration and Chebyshev balance deliberately discards.

\subsection{How to read the terms in the general bound}

The term $\lambda_{m,p}$ in \eqref{eq:general-full-bound} is the null signed
skewness scale already identified by Theorem~\ref{thm:null-edgeworth}.  The
term $1/p$ absorbs the difference between the normalizing scale $s_R$ and the
leading standard deviation $w_R$, together with finite smoothing terms.  The term
$\rho_R$ measures the cubic population contribution to the logarithm of the
Wishart transform.  Finally, $Q_R^{1/3}$ is the perturbation cost obtained
from \eqref{eq:ER-L2} after balancing concentration and normal
anticoncentration.

The simplified bound in \eqref{eq:general-simple-bound} is useful when only a
dimension only envelope is needed.  The more detailed bound in
\eqref{eq:general-full-bound} is more informative when $R$ is close to
$I_p$, because both $\rho_R$ and the correlation energy component of $Q_R$
then record proximity to $I_p$.  Neither \eqref{eq:general-full-bound} nor
\eqref{eq:general-simple-bound} implies that the cube root term
is necessary.  A matching lower bound would require an analysis of the joint
law of $(M_R,E_R)$ rather than an $L^2$ estimate for $E_R$ alone.

Theorem~\ref{thm:general-leading} isolates where sharp future work can begin.
The Wishart transform controls $M_R$ without a spectral truncation and gives
a Berry--Esseen type rate expressed through exact trace quantities.  To obtain
an analogue of \eqref{eq:unified-equivalent} for general $R$, one would need the
signed third cumulant of the full statistic and a uniform remainder smaller
than the signed third cumulant.  The nonlinear diagonal contribution could cancel or
reinforce the cubic Wishart contribution, so separate absolute bounds cannot
determine the answer.

\subsection{Exact and approximating variances}

The scale comparison has three levels.  The approximating variance $s_R^2$ in
\eqref{eq:general-scale} is simple enough to state and use.  The leading
variance $w_R^2$ in \eqref{eq:wR} is the exact variance of $M_R$ and differs
from $s_R^2$ by a null trigamma correction controlled in
\eqref{eq:scale-mismatch}.  The full variance $\tau_R^2$ in
\eqref{eq:tau-exact} includes all nonlinear diagonal covariances and differs
from $s_R^2$ by the nonnegative series in \eqref{eq:cm}.

Equation~\eqref{eq:tau-comparison} shows that replacing $s_R$ by $\tau_R$
does not alter the leading asymptotic scale uniformly in $R$.  The exact
series is nevertheless statistically useful: the series shows how each
pairwise population correlation contributes to the full variance through an
even analytic function.  Appendix~\ref{app:general-decomp} gives a
self contained derivation from Laguerre--Hermite orthogonality.

For completeness, the exact variance $\tau_R^2$ of the numerator in
\eqref{eq:general-statistic} is also available.  With
$(x)_k=x(x+1)\cdots(x+k-1)$, define, as in
Zhao~\cite[Proposition~3.2]{Zhao2026},
\begin{equation}
 c_m(r)=\sum_{k=1}^{\infty}
 \frac{(k-1)!}{k(m/2)_k}r^{2k},\qquad |r|\le1.
 \label{eq:cm}
\end{equation}
If $r_{ij}$ is the $(i,j)$ entry of $R$, then
\begin{align}
 \tau_R^2&=V_{m,p}+\sum_{i\ne j}c_m(r_{ij}),\label{eq:tau-exact}\\
 0&\le\tau_R^2-s_R^2\le\frac{4a_R}{m^2},\qquad
 \frac{\tau_R^2-s_R^2}{s_R^2}\le\frac2m.
 \label{eq:tau-comparison}
\end{align}
Appendix~\ref{app:general-decomp} derives the series from a
Laguerre--Hermite expansion and proves the remainder bound for
\eqref{eq:cm}.  Thus $s_R$ is
asymptotically equivalent to the exact scale
uniformly in $R$.

\section{Evaluating the unified null formula}\label{sec:rates}

The exact expression \eqref{eq:unified-equivalent} is preferable for finite
dimensions.  Elementary formulas are nevertheless useful for seeing how the
rate changes.  Recall $d=m-p$ and define
\begin{equation}
 \cA_d=-\sum_{k=d+1}^{\infty}\psi_2(k/2)>0.
 \label{eq:Ad}
\end{equation}

\begin{proposition}[Uniform polygamma evaluation]\label{prop:asymptotics}
Along every sequence $p\to\infty$, $m\ge p$,
\begin{equation}
 A_{m,p}\sim\left(\frac pm\right)^2\cA_d.
 \label{eq:A-uniform}
\end{equation}
If $d\to\infty$, then
\begin{equation}
 \cA_d=\frac4d+O(d^{-2}),\qquad
 A_{m,p}\sim\frac{4p^2}{dm^2}.
 \label{eq:A-growing}
\end{equation}
Along every admissible sequence with $d\ge1$ eventually,
\begin{equation}
 V_{m,p}\sim2H_{m,p},\qquad
 H_{m,p}=\log\frac md-\frac pm
 =\log\frac md-1+\frac dm.
 \label{eq:V-H}
\end{equation}
At $m=p$,
\begin{equation}
 V_{p,p}=2\log p+2\gamma_E+\frac{\pi^2}{4}+o(1).
 \label{eq:V-square}
\end{equation}
Here $\gamma_E$ denotes the Euler--Mascheroni constant.
Finally,
\begin{equation}
 \cA_0=\frac{4\pi^2}{3}+7\zeta(3),\qquad
 \cA_d=\cA_0+\sum_{k=1}^d\psi_2(k/2),
 \label{eq:A0}
\end{equation}
where $\zeta$ denotes the Riemann zeta function. Thus
$\cA_d$ decreases strictly with $d$.
\end{proposition}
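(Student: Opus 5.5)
The plan is to work directly with the polygamma sums, reindexing by $a_j=(m-j+1)/2$. As $j$ runs over $2,\dots,p$, the argument $a_j$ runs over $k/2$ with $k=m-j+1\in\{d+1,\dots,m-1\}$. The basic tool is the elementary expansion $\psi_r(x)=(-1)^{r+1}\{(r-1)!x^{-r}+\tfrac{r!}{2}x^{-r-1}+O(x^{-r-2})\}$ together with integral comparison.

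\textbf{Third cumulant (\eqref{eq:A-uniform}).} Write
\[
A_{m,p}=\sum_{k=d+1}^{m-1}\{-\psi_2(k/2)\}-(p-1)\{-\psi_2(M)\},
\]
where both terms are nonnegative. Two cases are needed.

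If $d$ stays bounded, then $p\sim m$, and the first sum equals $\cA_d-\sum_{k\ge m}(-\psi_2(k/2))=\cA_d-O(1/m)$. The second term is $O(p/m^2)=O(1/m)$. Hence $A_{m,p}\to\cA_d$, which is bounded below by $\cA_\infty$-type positivity, more precisely by $\cA_d>0$ bounded away from zero for bounded $d$.

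If $d\to\infty$, use $-\psi_2(x)=x^{-2}+x^{-3}+O(x^{-4})$. Then
\[
\sum_{k=d+1}^{m-1}\frac{4}{k^2}=4\Bigl(\frac1d-\frac1m\Bigr)+O(d^{-2}),
\qquad
(p-1)\frac{4}{m^2}=\frac{4(m-d)}{m^2}+O(m^{-2}),
\]
so the difference is
\[
\frac{4}{d}-\frac{8}{m}+\frac{4d}{m^2}+O(d^{-2})=\frac{4(m-d)^2}{dm^2}+O(d^{-2})=\frac{4p^2}{dm^2}+O(d^{-2}).
\]
The error $O(d^{-2})$ must be shown to be $o(p^2/(dm^2))$. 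This is the \textbf{main obstacle}: it fails in the dilute regime $p\ll m$ unless the error is itself expanded relative to $p/m$.

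To handle it, I would instead write $A_{m,p}=\sum_{k}\{h(k/2)-h(M)\}$ with $h=-\psi_2$. Applying the mean value theorem with $h'=\psi_3>0$ of size $\asymp x^{-3}$ gives each term as roughly $(M-k/2)\cdot 2/\xi^3$. Summing against the Riemann integral then yields $4p^2/(dm^2)\{1+O(1/d)\}$, since every relative error comes from replacing $\psi_3(x)$ by $2x^{-3}$ on $[d/2,m/2]$, which costs a relative $O(1/d)$. This also proves the second part of \eqref{eq:A-growing}. The claim $\cA_d=4/d+O(d^{-2})$ is the same tail-sum comparison. The two cases glue via subsequences, giving \eqref{eq:A-uniform} uniformly, since $(p/m)^2\cA_d\sim 4p^2/(dm^2)$ when $d\to\infty$.

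\textbf{Variance (\eqref{eq:V-H}).} The same device applies with $\psi_1(x)=x^{-1}+\tfrac12x^{-2}+O(x^{-3})$:
\[
V_{m,p}=\sum_{k=d+1}^{m-1}\psi_1(k/2)-(p-1)\psi_1(M)=2\log\frac md-\frac{2(p-1)}{m}+O(1/d)+O(1/m).
\]
For $d$ bounded with $d\ge1$, the term $\log(m/d)\to\infty$ dominates the $O(1)$ errors. For $d\to\infty$, the leading part is $2H_{m,p}$, and the errors are $O(1/d)$. To see that these are relatively small when $p\ll m$, note $H_{m,p}=-\log(1-t)-t\asymp t^2$ with $t=p/m$. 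Again the mean value form handles this: each term $\psi_1(k/2)-\psi_1(M)$ equals $\int_{k/2}^{M}(-\psi_2)$, and replacing $-\psi_2$ by $x^{-2}$ introduces a relative $O(1/d)$ error, so $V=2H\{1+O(1/d)\}$ exactly as before.

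\textbf{Square case (\eqref{eq:V-square}).} Here $k$ runs from $1$ to $p-1$. Use $\sum_{k=1}^{n}\psi_1(k/2)$ together with $\psi_1(1/2)=\pi^2/2$ and the duplication formula $\psi_1(x)+\psi_1(x+\tfrac12)=4\psi_1(2x)$. Summing exactly via $\sum_{k\le N}\psi_1(k)=N\psi_1(N)\dots$ (the identity $\sum_{k=1}^{N}\psi_1(k)=N\psi_1(N)+\psi(N)+\gamma_E+\pi^2/6$-type formula) extracts the constant $2\gamma_E+\pi^2/4$, while $(p-1)\psi_1(p/2)\to2$ cancels the linear part. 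Precisely, I would split into odd and even $k$, use $\psi_1(j+\tfrac12)$ and $\psi_1(j)$ series, and apply $\sum_{j<N}\psi_1(j)=N\psi_1(N)-1+\psi(N)+\gamma_E+\psi_1(1)\cdot 0$ to read off the constant.

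\textbf{Closed form for $\cA_0$ (\eqref{eq:A0}).} Write $\cA_0=-\sum_{k\ge1}\psi_2(k/2)=2\sum_{k\ge1}\sum_{n\ge0}(n+k/2)^{-3}$. Counting representations of each half-integer $x=n+k/2$ gives $16\sum_{j\ge1} j^{-3}\cdot\#$. Splitting into integer and half-integer $x$, with multiplicities $2x$ (for integers, from the count of $k$), produces $\zeta(2)$ and $\zeta(3)$ combinations equal to $4\pi^2/3+7\zeta(3)$. The recursion for $\cA_d$ is immediate from \eqref{eq:Ad}, and strict decrease follows from $\psi_2<0$.
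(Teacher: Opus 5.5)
Your overall route is essentially the paper's. It uses the series \eqref{eq:polygamma-series} and a monotone sum--integral comparison that keeps each difference $h(k/2)-h(M)$ intact; this is what the paper's ``interchanging nonnegative sums'' achieves, and it handles the dilute cancellation you correctly flag. It also uses a subsequence split into bounded and divergent $d$, a parity split at $m=p$, and the double series for $\mathcal A_0$.

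Your arguments for \eqref{eq:A-uniform}, \eqref{eq:A-growing}, \eqref{eq:V-H} and \eqref{eq:A0} go through, with small corrections:
\begin{itemize}
\item The relative error for divergent $d$ is $O(1/d+1/p)$, not $O(1/d)$. After replacing $\psi_3$ by $2x^{-3}$ (or $-\psi_2$ by $x^{-2}$), you still must pass from $\sum_{k=d+1}^{m-1}f(k)$ to $\int_d^m f$. Here $f(x)=4x^{-2}-4m^{-2}$ (resp.\ $2x^{-1}-2m^{-1}$) is decreasing and vanishes at $m$, so the error is at most $f(d)$. Since $\int_d^m f$ equals $4p^2/(dm^2)$ (resp.\ $2H_{m,p}\ge (p/m)^2$), you get $f(d)/\int_d^m f\le 2(1/d+1/p)$. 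In the dilute regime this discretization term is genuinely of order $1/p\gg 1/d$. So it is not true that all relative error comes from the polygamma replacement; it is harmless only because $p\to\infty$.
\item The derivative is $h'=-\psi_3$, not $\psi_3$.
\item In the count for $\mathcal A_0$, an integer $x=n+k/2$ has multiplicity $x$ (not $2x$), and a half integer has multiplicity $x+\tfrac12$. These counts give $8\zeta(2)+8\sum_{r\,\mathrm{odd}}r^{-3}$, as in the paper.
\end{itemize}

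The genuine gap is \eqref{eq:V-square}. You assert the constant $2\gamma_E+\pi^2/4$ rather than computing it, and neither trigamma partial-sum identity you write is correct. The right one is $\sum_{j=1}^{N}\psi_1(j)=N\psi_1(N)+\psi(N)+\gamma_E$, with no $\pi^2/6$; equivalently $\sum_{j=1}^{N-1}\psi_1(j)=(N-1)\psi_1(N)+\psi(N)+\gamma_E$. Your second version, $N\psi_1(N)-1+\psi(N)+\gamma_E$, is smaller by $1-\psi_1(N)\to1$, so following your recipe produces $2\gamma_E+\pi^2/4-1$.

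You also need the half-integer companion $\sum_{j=1}^{n}\psi_1(j-\tfrac12)=n\psi_1(n+\tfrac12)+2\sum_{r=1}^{n}\{(2r-1)^{-1}+(2r-1)^{-2}\}$. Both identities follow from \eqref{eq:polygamma-series} by counting, or from $\psi_1(x+1)=\psi_1(x)-x^{-2}$ as in the paper. With them, the parity split gives exactly
\[
\begin{aligned}
V_{2n+1,2n+1}&=2H_{2n}+2\sum_{r=1}^{n}(2r-1)^{-2}+n\{\psi_1(n+1)-\psi_1(n+\tfrac12)\},\\
V_{2n,2n}&=2H_{2n}-\frac1n+2\sum_{r=1}^{n}(2r-1)^{-2}+n\{\psi_1(n+\tfrac12)-\psi_1(n)\}.
\end{aligned}
\]
Then $H_N-\log N\to\gamma_E$, $\sum_{r\ge1}(2r-1)^{-2}=\pi^2/8$, and the braced terms being $O(n^{-2})$ give $2\log p+2\gamma_E+\pi^2/4+o(1)$. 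This is the paper's computation; its even-case formula is the same expression rewritten with $H_{2n-2}$.
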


The proof is given in Appendix~\ref{app:asymptotics}, with separate dense
and dilute estimates.  The separate dilute calculation is important: a
coarse harmonic sum error can be larger than the variance when $m$ grows much
faster than $p^2$.

The two parts of Proposition~\ref{prop:asymptotics} play different roles.
Equation~\eqref{eq:A-uniform} says that the third cumulant separates into a
global aspect ratio factor and a local gap tail $\cA_d$.  Equation
\eqref{eq:V-H} says that the variance is governed by the entropy like
difference between a logarithm and the first order linear approximation to
the logarithm.  The
cancellation in $H_{m,p}$ becomes severe when $p/m$ is small, which is why a
dilute proof must retain more than a coarse logarithmic approximation.

The gap tail $\cA_d$ also explains the hard boundary.  When $d$ is fixed,
the final beta parameters remain fixed and the third cumulant approaches a
gap dependent constant after the aspect ratio factor is removed.  When
$d\to\infty$, the tail of the polygamma series decays like $4/d$.  The
transition is not captured by substituting $d=0$ into a formula for large $d$.
Equation~\eqref{eq:A-uniform} is the common formula on both sides of the
transition.

\begin{corollary}[Sharp square supremum]\label{cor:square-supremum}
The square model is asymptotically worst:
\begin{equation}
 \lim_{p\to\infty}(\log p)^{3/2}
 \sup_{m\ge p}\DK(\Znull,\N(0,1))
 =C_*,
 \label{eq:square-supremum}
\end{equation}
where
\begin{equation}
 C_*=\frac{4\pi^2/3+7\zeta(3)}{24\sqrt\pi},\qquad
 0.50715638<C_*<0.50715639.
 \label{eq:Cstar}
\end{equation}
The supremum is asymptotically attained at $m=p$.
\end{corollary}

The proof is given in Appendix~\ref{app:asymptotics}.
Corollary~\ref{cor:square-supremum} is stronger than comparing a few named
regimes.  A maximizing sequence could, in principle, choose a different gap
at every dimension and could pass between fixed gap and divergent gap
regions.  The proof first compares all fixed gaps by the strict decrease of
$\cA_d$, and then gives a uniform exclusion for every divergent gap sequence.
The two arguments close the crossover that a pointwise regime table leaves
open.

The decimal in \eqref{eq:Cstar} is not used to prove the symbolic constant.
The symbolic value follows from the polygamma recurrence and the classical
zeta values recorded in Appendix~\ref{app:asymptotics}.  The Lean
formalization separately certifies the rational interval containing the first
eight decimal places, as recorded in Appendix~\ref{app:lean-ledger}.

For a fixed gap, strict decrease of $\cA_d$ makes $d=0$ largest, while every
divergent gap sequence is smaller on the $(\log p)^{-3/2}$ scale.

When $d\to\infty$, substituting \eqref{eq:A-growing} and \eqref{eq:V-H}
into \eqref{eq:unified-equivalent} gives the useful master expression
\begin{equation}
 \DK(\Znull,\N(0,1))
 \sim
 \frac{(p/m)^2}{6\sqrt\pi\,d
 \{\log(m/d)-1+d/m\}^{3/2}}.
 \label{eq:gap-master}
\end{equation}
Equation~\eqref{eq:gap-master} must not be used at fixed $d$; the fixed gap constant comes from
\eqref{eq:Ad} instead.

If $p/m\to\gamma\in(0,1)$, then
\begin{equation}
 V_{m,p}\longrightarrow-2\{\gamma+\log(1-\gamma)\},\qquad
 pA_{m,p}\longrightarrow\frac{4\gamma^3}{1-\gamma}.
 \label{eq:proportional-cumulants}
\end{equation}
Therefore
\begin{align}
 \DK(\Znull,\N(0,1))&\sim\frac{C_0(\gamma)}p,
 \label{eq:proportional-rate}\\
 C_0(\gamma)&=
 \frac{2\gamma^3}
 {3\sqrt{2\pi}(1-\gamma)
 [-2\{\gamma+\log(1-\gamma)\}]^{3/2}}.
 \label{eq:Cgamma}
\end{align}
Equations~\eqref{eq:proportional-rate}--\eqref{eq:Cgamma} give the
proportional regime equivalent for the exactly centered and scaled statistic
\eqref{eq:null-statistic}. At the dilute boundary,
\[
 \lim_{\gamma\downarrow0}C_0(\gamma)=\frac{2}{3\sqrt{2\pi}},
\]
so the dilute constant is the boundary value of the proportional constant.
This approximation is not uniform as
$\gamma\uparrow1$: letting $\gamma\uparrow1$
in \eqref{eq:Cgamma} does not recover the fixed gap hard edge;
\eqref{eq:unified-equivalent} is needed for the hard edge transition.

The boundary warning has a concrete analytic cause.  For every fixed
$\gamma<1$, the variance in \eqref{eq:proportional-cumulants} converges to a
finite positive value.  At the square endpoint, the variance instead grows
like $2\log p$ by \eqref{eq:V-square}.  Sending $\gamma$ to one after taking
the proportional limit therefore interchanges two nonuniform operations.
The all array statement in Theorem~\ref{thm:null-edgeworth} is designed to
avoid the nonuniform interchange of limits.

\section{Representative rates and asymptotic regimes}\label{sec:regimes}

The exact equivalent in \eqref{eq:unified-equivalent} is the recommended
finite parameter expression.  Table~\ref{tab:regime-rates} records useful
asymptotic simplifications under common constraints on the dimension gap.
The substitutions leading to Table~\ref{tab:regime-rates} are proved in
Appendix~\ref{app:asymptotics}.

\subsection{Rates under common dimension gap constraints}

The following table is a direct consequence of Theorem
\ref{thm:null-edgeworth} and Proposition~\ref{prop:asymptotics}.  Each row is
a sharp equivalent, not merely an order upper bound.

\begin{table}[t]
\caption{Sharp null Kolmogorov equivalents under representative constraints.}
\label{tab:regime-rates}
\centering\small
\begin{tabular}{>{\raggedright\arraybackslash}p{0.22\textwidth}
                >{\raggedright\arraybackslash}p{0.27\textwidth}
                >{\raggedright\arraybackslash}p{0.42\textwidth}}
\toprule
Constraint & Assumptions & Sharp Kolmogorov equivalent \\
\midrule
$m=p$ & square hard edge &
$C_*(\log p)^{-3/2}$ \\
\addlinespace
$m-p=d_0$ & fixed integer $d_0\ge0$ &
$\cA_{d_0}/\{24\sqrt\pi(\log p)^{3/2}\}$ \\
\addlinespace
$m-p\sim c\log p$ & $c\in(0,\infty)$ &
$1/\{6\sqrt\pi c(\log p)^{5/2}\}$ \\
\addlinespace
$m-p\sim cm^\alpha$ & $c>0$, $0<\alpha<1$ &
$1/\{6\sqrt\pi cm^\alpha[(1-\alpha)\log m]^{3/2}\}$ \\
\addlinespace
$m-p\sim m/\log m$ & &
$\log m/\{6\sqrt\pi m(\log\log m)^{3/2}\}$ \\
\addlinespace
$p/m\to\gamma$ & $\gamma\in(0,1)$ &
$C_0(\gamma)/p$ \\
\addlinespace
$p/m\to0$ & $p\to\infty$ &
$2/\{3\sqrt{2\pi}p\}$ \\
\bottomrule
\end{tabular}
\end{table}

Because $m=n-1$, replacing $m$ by $n$ does not change any leading equivalent.
For example, the fourth row of Table~\ref{tab:regime-rates} applies when
$n-p\sim n^\alpha$, and the fifth row applies when
$n-p\sim n/\log n$.  If $m-p=O(1)$ without convergence to a fixed integer,
different subsequences can have different constants $\cA_d$; Corollary
\ref{cor:square-supremum} identifies the largest possible constant.

The first two rows of Table~\ref{tab:regime-rates} belong to the hard edge
family.  The logarithmic variance is common to all fixed gaps, while the
third cumulant constant depends on the eventual integer gap.  The third,
fourth, and fifth rows form a growing gap bridge.  The rates in the third
through fifth rows of Table~\ref{tab:regime-rates} contain both
the inverse gap and a logarithmic variance factor; the different formulas
come only from inserting the corresponding growth of $d$ into
\eqref{eq:gap-master}.

The sixth row is the classical proportional regime.  Both $p$ and the gap
are of order $m$, the variance approaches a constant, and the third cumulant
is of order $1/p$.  The seventh row is dilute.  In the dilute regime, strong
cancellation makes both the variance and the third cumulant small before
standardization, but the standardized third cumulant ratio again has order
$1/p$.  Its sharp constant is the boundary value
$\lim_{\gamma\downarrow0}C_0(\gamma)=2/(3\sqrt{2\pi})$ of the proportional
constant in \eqref{eq:Cgamma}.

\subsection{Examples of regime identification}

Suppose that a design has $p=n-n^\alpha$ with $0<\alpha<1$.  Since
$m=n-1$, the gap is asymptotic to $n^\alpha$, and the fourth row of
Table~\ref{tab:regime-rates} applies.  If the design instead has
$p=n-n/\log n$, the fifth row applies.  The replacement of $n$ by $m=n-1$
changes neither equivalent because the difference is lower order than the
displayed gaps.

Suppose next that $m-p$ remains bounded but does not converge.  No single
fixed gap constant need exist.  Every subsequence along which the integer gap
is constant has the corresponding second row constant, and the limsup is
bounded by the square constant from Corollary
\ref{cor:square-supremum}.  Reporting only an $O((\log p)^{-3/2})$ rate hides
the possible subsequential constants; Equation~\eqref{eq:unified-equivalent}
keeps the exact distinction.

Finally, suppose that the aspect ratio is observed to be close to one at
finite dimensions but the asymptotic gap law is unknown.  The proportional
constant in \eqref{eq:Cgamma} is not a safe extrapolation because the
proportional approximation is nonuniform as $\gamma\uparrow1$.  The exact
polygamma equivalent \eqref{eq:unified-equivalent} should be used first.  A
row of Table~\ref{tab:regime-rates} should be selected only after an
asymptotic gap assumption has been justified.

Three levels of approximation serve different purposes.  Equation
\eqref{eq:unified-equivalent} retains the exact polygamma sums and remains the
sharp first order answer throughout the full domain $m\ge p$.  Equation
\eqref{eq:gap-master} removes the polygamma notation when $m-p\to\infty$ but
retains the crossover quantities $p/m$ and $\log\{m/(m-p)\}-p/m$.  The rows
of Table~\ref{tab:regime-rates} are the most transparent expressions after a
specific asymptotic constraint has been selected.  In particular, the
proportional formula \eqref{eq:Cgamma} must not be extrapolated to the fixed gap
boundary $p/m\to1$, whereas Equation~\eqref{eq:unified-equivalent} remains
valid at the fixed gap boundary.

The hierarchy also separates theorem error from algebraic simplification.
Theorem~\ref{thm:null-edgeworth} controls the difference between the true CDF
and the signed Edgeworth approximation.  Proposition
\ref{prop:asymptotics} controls the replacement of exact polygamma sums by
elementary expressions.  A finite dimensional discrepancy between a table
row and the exact law can arise from either replacement.  Keeping
\eqref{eq:unified-equivalent} as the primary statement prevents the second
replacement from being mistaken for part of the Berry--Esseen theorem.

\section{Statistical importance and applications}\label{sec:applications}

The log determinant is not merely a spectral summary.  For a centered
Gaussian vector with correlation matrix $R$, the total correlation, also
called Gaussian multi information, equals $-\tfrac12\log\det R$; Rowe and
Day~\cite{RoweDay2019} discuss the sampling distribution of the corresponding
empirical statistic.  Consequently, a distributional approximation for
$\log\det\widehat R$ quantifies uncertainty for a global dependence measure
equal to zero exactly at Gaussian independence.

Likelihood ratio procedures provide a second application.  Most directly,
under Gaussian sampling, a sample size multiple of
\(-\log\det\widehat R\) is the likelihood ratio statistic for mutual
independence of the \(p\) coordinates; see
Muirhead~\cite[Chapter~8]{Muirhead1982} and Jiang and Qi~\cite{JiangQi2015}.
Determinants also enter broader tests of covariance structure.  A qualitative central limit theorem gives
an asymptotic critical value, but a sharp Kolmogorov equivalent also describes
the size of the normal calibration error.  The distinction matters when the
dimension is close to the residual sample size, because Corollary
\ref{cor:square-supremum} shows that the worst null rate is only
$(\log p)^{-3/2}$.

The uniformity in $m\ge p$ also permits comparisons across study designs.
Equation~\eqref{eq:unified-equivalent} separates the exact finite parameter
skewness scale from a later choice of asymptotic regime.  A statistician can
therefore use the same normal approximation theorem for a nearly singular
design, a proportional high dimensional design, or a dilute design without
silently switching normalizations.  For a nonidentity population correlation,
Theorem~\ref{thm:general-full} further identifies how population dependence
enters through the correlation energy and through the nonlinear
diagonal standardization remainder.

\subsection{Normal calibration of global dependence}

Total correlation aggregates dependence across all coordinates.  In the
Gaussian model it equals $-\tfrac12\log\det R$, so
\eqref{eq:null-statistic} calibrates the empirical statistic at independence;
see Rowe and Day~\cite{RoweDay2019}.  The signed profile in
\eqref{eq:null-edgeworth} gives threshold specific corrections, while
\eqref{eq:unified-equivalent} gives their largest absolute CDF discrepancy.
In particular, Corollary~\ref{cor:square-supremum} shows why large dimension
alone does not ensure an accurate normal calibration near the hard edge.

\subsection{Design comparison and sample size sensitivity}

The gap $d=m-p$ measures the residual dimension.  Table
\ref{tab:regime-rates} shows the transition from hard edge behavior to the
polynomial proportional and dilute rates, but it is not itself a sample size
formula: Kolmogorov distance measures absolute CDF error rather than power or
relative tail error.  For nonidentity $R$, the computable quantities $s_R$,
$\rho_R$, and $Q_R$ in \eqref{eq:general-full-bound} retain population
dependence; \eqref{eq:general-simple-bound} removes them at the price of a
coarser dimension only rate.

\section{Lean verification and reproducibility}\label{sec:lean}

The published paper specific release is capsule \texttt{v1.1.2}, permanently
archived on Zenodo with DOI
\href{https://doi.org/10.5281/zenodo.21898548}{\nolinkurl{10.5281/zenodo.21898548}}
\cite{ZhaoLeanVerification2026} and mirrored in the author's
\href{https://github.com/HongruZhao/logdet_Berry_Essen_lean}{GitHub repository}.
The archived ZIP has SHA-256
\texttt{563800a00392742c4c4aa1e37296d274\allowbreak
bd66b61abf7ec7a11844d1258220dc4e}. It pins Lean \texttt{4.33.0-rc2}, commit
\texttt{d8b1897...76c17}, and mathlib commit
\texttt{641fbd3...6bd36}; its manifest gives complete hashes and pins every
transitive dependency. The release preserves the eleven public endpoint types
from \texttt{v1.1.1} and adds the two public endpoints for
\eqref{eq:A-growing} and \eqref{eq:A0}, giving thirteen principal public
endpoints in total.

For \texttt{v1.1.2}, \texttt{./scripts/verify.sh} was run both from a fresh
source tree and from a fresh extraction of the final ZIP.  In each run the
source audit and complete build succeeded, all thirteen endpoint types
elaborated, eight root modules passed \texttt{leanchecker}, and every
\texttt{\#print axioms} report was exactly \texttt{propext},
\texttt{Classical.choice}, and \texttt{Quot.sound}.  The endpoint closure was
sorry free and contained no project axiom or unsafe trust escape.

To reproduce the check, install Elan (version 4.2.3 was used), extract the
archive, and run \texttt{./scripts/verify.sh}; the first dependency download
requires network access.  Success ends with \texttt{Paper-specific Lean
verification passed.}  Version and checksum identify the verified
bytes, so any change requires a new release.

The library follows the Gaussian sample through the beta product, cumulants,
Fourier analysis, the uniform null expansion, the Wishart decomposition for
general \(R\), and proved sample law bridges.  Kernel checking certifies these
encoded deductions, not bibliographic priority or prose;
Appendix~\ref{app:lean-crosswalk} gives the complete paper facing boundary.

\section{Conclusion and open problems}

Under \(R=I_p\), the exact standardized third cumulant is the unified sharp
Kolmogorov scale, simultaneously covering the square, fixed gap,
growing gap, proportional, and dilute regimes.  For arbitrary positive
definite \(R\), the proof separates an analytically controlled Wishart
leading term from the nonlinear diagonal standardization remainder and
gives the universal bound in Theorem~\ref{thm:general-full}.  Exact
finite dimensional structure is retained until the final asymptotic
simplification, which is what preserves the sharp null constant at the hard
edge.

The term \(Q_R^{1/3}\) is a proved perturbative loss and is not claimed to be
optimal.  A natural open problem is to expand the fourth chaos contribution
and exploit its dependence on \(M_R\); this suggests, but does not yet prove,
a sharper envelope of order \(1/p+1/\sqrt m\).  A second problem is a
cancellation adaptive Edgeworth expansion for general \(R\) in which the fourth
cumulant becomes leading when the third cumulant vanishes.

\section*{Statements and declarations}

\textit{Funding.}
The author was supported by the IRSA Faragher Distinguished Postdoctoral
Fellowship.

\textit{Competing interests.}
The author has no relevant financial or nonfinancial interests to disclose.

\textit{Data availability.}
No datasets were generated or analyzed in this study.

\textit{Code availability.}
The published Lean~4 verification capsule \texttt{v1.1.2} with thirteen
principal public endpoints is archived at
\href{https://doi.org/10.5281/zenodo.21898548}{\nolinkurl{10.5281/zenodo.21898548}}
\cite{ZhaoLeanVerification2026}. It
contains the paper specific source dependency closure, pinned environment,
build and axiom reports, theorem types, public file manifest, checksums, and
reproduction instructions. That archived ZIP has SHA-256
\nolinkurl{563800a00392742c4c4aa1e37296d274bd66b61abf7ec7a11844d1258220dc4e}.
The software is licensed under
\texttt{GPL-3.0-only}.

\textit{Acknowledgments.}
The author acknowledges assistance from OpenAI ChatGPT 5.6 Sol
Ultra in developing and editing the manuscript, conducting literature
searches, checking algebraic and probabilistic arguments, generating and
refining the Lean 4 formalization, and tracing formal declarations to the
corresponding mathematical statements. The author independently reviewed and
verified the mathematical content, citations, Lean code, and correspondence
between the manuscript and the formal development, and assumes full
responsibility for the work.

\appendix
\begingroup\small
\setlength{\abovedisplayskip}{4.5pt plus 1.5pt minus 1pt}
\setlength{\belowdisplayskip}{4.5pt plus 1.5pt minus 1pt}
\setlength{\abovedisplayshortskip}{2.5pt plus 1pt}
\setlength{\belowdisplayshortskip}{3.5pt plus 1pt minus 1pt}

\section{Exact null reduction and cumulants}\label{app:beta}

\begin{proof}[Proof of Proposition~\ref{prop:beta-product}]
The Gaussian projection and correlation normalization identities
below are also recorded in Zhao~\cite{Zhao2026}; they are derived here to fix
the finite dimensional parameter map. Let \(X\) be the \(n\times p\) data matrix and let
\(P=I_n-n^{-1}{\bf1}{\bf1}^{\mathsf T}\). Choose an isometry
\(U:\mathbb R^m\to{\bf1}^{\perp}\), \(m=n-1\). Under \(R=I_p\), the columns
\(Y_j=U^{\mathsf T}PX_{\cdot j}\) are independent
\(N_m(0,\Sigma_{jj}I_m)\) vectors.  Their positive coordinate scales cancel
under Pearson normalization, hence
\[
 \widehat R\stackrel d=
 \bigl(\langle V_i,V_j\rangle\bigr)_{i,j\le p},
 \qquad V_j=Y_j/\|Y_j\|.
\]
The \(V_j\)'s are independent and uniform on \(S^{m-1}\). The classical
Gram determinant factorization is given by Rouault
\cite[Propositions~2.1(2), 2.3]{Rouault2007}. In the present notation,
Gram--Schmidt gives
\[
 \det\widehat R=\prod_{j=2}^{p}
 \left\|\operatorname{proj}_{\operatorname{span}(V_1,\ldots,V_{j-1})^\perp}
 V_j\right\|^2.
\]
Conditional on the preceding directions, rotational invariance sends their
span to the first \(j-1\) coordinate axes. Writing the squared projection
as a ratio of independent $\chi^2$ variables proves
\[
 B_j\sim {\rm Beta}\!\left(\frac{m-j+1}{2},\frac{j-1}{2}\right).
\]
The conditional law does not depend on the preceding directions, so the
successive factors are independent. This also establishes the exact
parameter map to Rouault~\cite{Rouault2007}.
\end{proof}

For \(a_j=(m-j+1)/2\) and \(M=m/2\), the beta Mellin transform and its
polygamma derivatives are recorded in
Heiny--Johnston--Prochno~\cite[Sections~3.3--3.5]{HJP2022}. Directly in the
present notation, the beta integral yields, on \(\Re z>-a_j\),
\[
 \mathbb E B_j^z
 =\frac{\Gamma(a_j+z)\Gamma(M)}
        {\Gamma(a_j)\Gamma(M+z)}.
\]
The logarithm is analytic on that half plane. Differentiation at zero and
addition over the independent factors gives, for every \(r\ge1\),
\[
 \kappa_r(L_{m,p})
 =\sum_{j=2}^{p}\{\psi_{r-1}(a_j)-\psi_{r-1}(M)\},
 \qquad L_{m,p}=\sum_{j=2}^{p}\log B_j.
\]
This proves the exact center, variance, and third cumulant displays in the
main text. Positivity of \(V_{m,p}\) follows because each beta factor is
nondegenerate. The same Mellin transform gives the characteristic function;
compare Heiny--Johnston--Prochno~\cite[Section~3.5]{HJP2022}. Thus
\[
 \varphi_{m,p}(t)=
 \exp\!\left(-it\,b_{m,p}/\sqrt{V_{m,p}}\right)
 \prod_{j=2}^{p}
 \frac{\Gamma(a_j+it/\sqrt{V_{m,p}})\Gamma(M)}
      {\Gamma(a_j)\Gamma(M+it/\sqrt{V_{m,p}})}.
 \tag{A.1}\label{eq:cf-product}
\]
After the preceding identification with the spherical log beta sum,
Theorem~A of Heiny--Johnston--Prochno~\cite{HJP2022} gives the published
finite bound.
Their variable is one half of \(L_{m,p}\) plus a deterministic constant;
centering and standardization remove both changes. This is a literature
comparison, not an additional theorem of the present paper.

\section{Full frequency proof of the null theorem}\label{app:fourier}
\label{app:cumulants}

\begin{proof}[Proof of Theorem~\ref{thm:null-edgeworth}]
We use the finite estimates (B.2)--(B.6), including the signed inversion
step.  The standard polygamma series
\cite{NIST2010}
\[
 (-1)^{q+1}\psi_q(x)=q!\sum_{k=0}^{\infty}(x+k)^{-q-1},
 \qquad x>0,\ q\ge1,
 \tag{B.1}\label{eq:polygamma-series}
\]
implies, with \(\Delta_{m,p}=a_*\sqrt{V_{m,p}}\) and \(a_*=(m-p+1)/2\),
\[
 \frac{|\kappa_r(L_{m,p})|}{V_{m,p}^{r/2}}
 \le \frac{r!}{2}\,
      \lambda_{m,p}\left(\frac{C}{\Delta_{m,p}}\right)^{r-3},
 \qquad r\ge3.                                      \tag{B.2}
\]
The comparison follows termwise from (B.1): after one factor supplies the
third cumulant sum, every further denominator is bounded below by \(a_*\).
The elementary sum and integral bounds in Appendix~\ref{app:asymptotics} show
that \(\Delta_{m,p}\to\infty\) uniformly over all integers \(m\ge p\) as \(p\to\infty\) and
that \(\lambda_{m,p}\to0\) uniformly.

For \(|t|\le c\Delta_{m,p}\), expand the analytic logarithm of
\eqref{eq:cf-product}. Bound (B.2), summed geometrically, gives
\[
 \log\varphi_{m,p}(t)
 =-\frac{t^2}{2}
  +\frac{i\lambda_{m,p}t^3}{6}
  +O\!\left(
     \lambda_{m,p}\frac{|t|^4}{\Delta_{m,p}}+
     \lambda_{m,p}^2|t|^6\right),                   \tag{B.3}
 \label{eq:null-local-log}
\]
uniformly in the admissible array. Exponentiation, with Gaussian damping,
then gives
\[
 \int_{|t|\le c\Delta_{m,p}}
 \frac{\left|\varphi_{m,p}(t)
 -e^{-t^2/2}(1+i\lambda_{m,p}t^3/6)\right|}{|t|}\,dt
 \le C\left\{\frac{\lambda_{m,p}}{\Delta_{m,p}}
                 +\lambda_{m,p}^2\right\}
 =o(\lambda_{m,p}).                                 \tag{B.4}
 \label{eq:null-local-fourier}
\]

For real \(x>0\), the gamma product identity
\[
 \frac{|\Gamma(x+iu)|^2}{\Gamma(x)^2}
 =\prod_{k=0}^{\infty}
   \left(1+\frac{u^2}{(x+k)^2}\right)^{-1}           \tag{B.5}
 \label{eq:gamma-modulus-product}
\]
applied factor by factor in \eqref{eq:cf-product} gives Gaussian damping
up to a fixed multiple of \(\Delta_{m,p}\).  Put
\(u=t/\sqrt{V_{m,p}}\), \(M=m/2\), and \(q_p=p(p-1)/8\).  Monotonicity of
the product controls the middle range by its value at
\(|u|=ca_*\).  For \(|u|\ge M\), summing the first factor across the beta
indices gives the explicit bound
\[
 |\varphi_{m,p}(u\sqrt{V_{m,p}})|
 \le \{1+(u/M)^2\}^{-q_p}.
\]
The middle integral is at most
\(Ce^{-c\Delta_{m,p}^2}\log(2M/a_*)\), and the last display integrates to
at most \(C2^{-q_p}/q_p\).  Since
\(1+\log(2M/a_*)\le C(1+\Delta_{m,p}^2)\), the actual and comparator tails
satisfy
\[
 \int_{|t|>c\Delta_{m,p}}
 \frac{|\varphi_{m,p}(t)|+e^{-t^2/2}(1+\lambda_{m,p}|t|^3)}
 {|t|}\,dt
 \le C(1+\Delta_{m,p}^2)e^{-c\Delta_{m,p}^2}
 =o(\lambda_{m,p}).                                  \tag{B.6}
 \label{eq:null-tail-fourier}
\]

Let \(\nu_{m,p}\) be the signed measure with characteristic function
\(e^{-t^2/2}(1+i\lambda_{m,p}t^3/6)\). Fourier inversion gives its
distribution function as
\[
 G(x):=\Phi(x)-\frac{\lambda_{m,p}}6(1-x^2)\phi(x).
\]
Thus \(G\) is the cumulative function of \(\nu_{m,p}\); write \(\psi\) for
its Fourier transform. We use the following signed inversion bound.

\begin{lemma}[Signed Fourier inversion]\label{lem:signed-inversion}
Let \(\mu\) be a probability measure with continuous CDF \(F\), and let
\(\nu\) be a finite signed measure of total mass one with continuous
cumulative function \(G\). If their Fourier transforms \(\varphi\) and
\(\psi\) satisfy
\[
 \int_{\mathbb R}\frac{|\varphi(t)-\psi(t)|}{|t|}\,dt<\infty,
\]
then
\[
 \sup_x|F(x)-G(x)|\le\frac1{2\pi}\int_{\mathbb R}
 \frac{|\varphi(t)-\psi(t)|}{|t|}\,dt
\]
\end{lemma}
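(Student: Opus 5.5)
The plan is the classical Lévy--Gil-Pelaez inversion, applied to the difference measure $\mu-\nu$. Put $H=F-G$. This is the cumulative function of the finite signed measure $\mu-\nu$. That measure has total mass zero, so $H(x)\to0$ as $x\to\pm\infty$. Its Fourier transform is $\varphi-\psi$, which vanishes at $t=0$. The integrability hypothesis makes $(\varphi(t)-\psi(t))/t$ absolutely integrable near $0$ and at infinity. We aim for the representation
\[
 H(x)=\frac{1}{2\pi}\int_{\mathbb R}\frac{e^{-itx}\{\varphi(t)-\psi(t)\}}{it}\,dt
\]
up to a sign convention, holding for every $x$. Taking absolute values then gives the stated bound immediately.

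To prove the representation, start from the truncated Gil-Pelaez identity for each finite signed measure separately. For a finite signed measure $\sigma$ with transform $\hat\sigma$ and $T>0$, Fubini gives
\[
 \int_{-T}^{T}\frac{e^{-itx}\hat\sigma(t)-e^{itx}\overline{\hat\sigma(t)}}{2it}\,dt
 =\int\!\int_{-T}^{T}\frac{\sin(t(y-x))}{t}\,dt\,\sigma(dy).
\]
It is cleaner to pair $t$ with $-t$, which produces $\int_0^T t^{-1}\sin(t(y-x))\,dt$ integrated against $\sigma(dy)$. This inner integral is bounded uniformly in $T$ and $y$, and as $T\to\infty$ it converges to $\tfrac{\pi}{2}\operatorname{sgn}(y-x)$. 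Dominated convergence against $|\sigma|$ then yields the limit $\tfrac{\pi}{2}\{\sigma(x,\infty)-\sigma(-\infty,x)\}$. Apply this with $\sigma=\mu-\nu$. Total mass zero together with the continuity of $F$ and $G$ (so that there is no atom at $x$) turns the limit into $-\pi H(x)$.

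On the Fourier side, the truncated integral is $\int_0^T t^{-1}\operatorname{Im}\{e^{-itx}(\varphi-\psi)(t)\}\,dt$ up to a constant factor. Because $|\varphi-\psi|/|t|$ is integrable, this converges absolutely as $T\to\infty$ to an integral bounded by $\int_0^\infty|\varphi-\psi|/t\,dt$. Since $\varphi(-t)=\overline{\varphi(t)}$ and likewise for $\psi$, that bound equals $\tfrac12\int_{\mathbb R}|\varphi-\psi|/|t|\,dt$. Matching constants gives $|H(x)|\le(2\pi)^{-1}\int_{\mathbb R}|\varphi-\psi|/|t|\,dt$ for every $x$, and taking the supremum finishes the proof. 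Note that the conjugate symmetry of $\psi$ requires $\nu$ to be real, which holds because it is a real signed measure.

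The main point needing care is the interchange step. The individual transforms $\varphi$ and $\psi$ alone need not make $\hat\sigma(t)/t$ integrable near zero. This is why we work with the odd combination $\sin(t(y-x))/t$, which is bounded, so Fubini applies on $[0,T]$ without any integrability assumption. The hypothesis on $|\varphi-\psi|/|t|$ is then used only when passing $T\to\infty$ on the Fourier side. Continuity of $F$ and $G$ is used exactly to remove the half-mass atom term at $x$.
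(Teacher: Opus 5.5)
Your Gil-Pelaez route is sound and gives the right constant, but the reason you give for the interchange is false. The kernel $\sin(t(y-x))/t$ is not bounded on $[0,T]\times\mathbb R$: it tends to $y-x$ as $t\downarrow0$. Moreover, $\int_0^T\lvert\sin(t(y-x))\rvert\,t^{-1}\,dt$ grows like $\tfrac{2}{\pi}\log(T\lvert y-x\rvert)$, so Fubini with respect to $dt\otimes\lvert\sigma\rvert$ would need $\int\log(1+\lvert y\rvert)\,\lvert\sigma\rvert(dy)<\infty$.

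The hypotheses do not give this. Let $\mu$ spread mass $6/(\pi^2k^2)$ uniformly over $[2^k,2^k+1]$, $k\ge1$, and let $\nu$ be $\mu$ translated by $2$. Then $F$ and $G$ are continuous, and $\lvert\varphi-\psi\rvert/\lvert t\rvert=\lvert\varphi(t)\rvert\,\lvert1-e^{2it}\rvert/\lvert t\rvert\le\min(2,4t^{-2})$ is integrable. Yet $\lvert\sigma\rvert$ has no logarithmic moment, because the supports separate for large $k$.

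Your displayed identity is still true, and the repair is the standard one. First integrate over $[\eta,T]$ with $\eta>0$, where the kernel is bounded by $1/\eta$ and Fubini is legitimate, and then let $\eta\downarrow0$. On the Fourier side, the integrability of $\lvert\varphi-\psi\rvert/\lvert t\rvert$ gives the limit. On the measure side, $\int_\eta^T\sin(ta)\,t^{-1}\,dt$ is bounded uniformly in $\eta$, $T$ and $a$, so dominated convergence against $\lvert\sigma\rvert$ applies, exactly as in your $T\to\infty$ step.

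With that repair, your proof takes a genuinely different route from the paper's. The paper convolves $\mu-\nu$ with a centered Gaussian of variance $\varepsilon$. Ordinary inversion then writes $(F-G)*\phi_\varepsilon(x)$ as the full-line integral of $e^{-itx}(\varphi-\psi)(t)e^{-\varepsilon t^2/2}/(-it)$. Taking absolute values gives the bound uniformly in $\varepsilon$, since $e^{-\varepsilon t^2/2}\le1$, and continuity of $F$ and $G$ is what allows $\varepsilon\downarrow0$.

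You instead obtain, with no smoothing, the exact pointwise identity $F(x)-G(x)=-\pi^{-1}\int_0^\infty t^{-1}\operatorname{Im}\{e^{-itx}(\varphi-\psi)(t)\}\,dt$. Continuity enters only to remove the half-atom at $x$. Your version makes the representation explicit. The price is that you must handle the conditionally convergent Dirichlet kernel, which is exactly where the $\eta$-cutoff is needed, and use conjugate symmetry to turn the half-line integral into the stated full-line constant. The paper's Gaussian factor keeps the frequency integrals absolutely convergent and gives the full-line constant directly, at the cost of a limit in the spatial variable.
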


Indeed, put \(\rho=\mu-\nu\), whose total mass is zero, and convolve it with
a centered Gaussian of variance \(\varepsilon>0\). Ordinary Fourier
inversion applied to the smoothed cumulative difference gives
\[
 (F-G)*\phi_\varepsilon(x)
 =\frac1{2\pi}\int_{\mathbb R}e^{-itx}
 \frac{\varphi(t)-\psi(t)}{-it}e^{-\varepsilon t^2/2}\,dt.
\]
Taking absolute values gives the lemma's right side. Continuity of \(F\) and
\(G\) permits \(\varepsilon\downarrow0\), while the assumed integrability
supplies domination. This proves the lemma. Bounds (B.4) and (B.6) verify its
hypothesis and prove \eqref{eq:null-edgeworth}. Since
\(\sup_x|(1-x^2)\phi(x)|=\phi(0)\), evaluation at \(x=0\) supplies the
matching lower bound and proves \eqref{eq:unified-equivalent}. The
high frequency estimate is essential at \(m=p\): a standalone smoothing
error at \(T\asymp\sqrt{\log p}\) would exceed the target
\((\log p)^{-3/2}\).
\end{proof}

\section{Uniform evaluation of the exact sums}\label{app:asymptotics}

\begin{proof}[Proof of Proposition~\ref{prop:asymptotics}]
Put \(d=m-p\). Applying \eqref{eq:polygamma-series}, interchanging
nonnegative sums, and comparing every monotone sum with its integral gives,
uniformly over all integers \(m\ge p\) as \(p\to\infty\),
\[
 A_{m,p}\sim \left(\frac pm\right)^2\mathcal A_d,
 \qquad
 V_{m,p}\sim 2H_{m,p}\quad\text{when }m>p,           \tag{C.1}
\]
where
\[
 H_{m,p}=\log\frac md-\frac pm
\]
and \(\mathcal A_d\) is the positive endpoint series displayed in
Section~\ref{sec:rates}. The error in the first relation is bounded by the
first neglected monotone integral uniformly in \(d\); the variance comparison
uses the trapezoidal remainder and is uniform after the strict gap and
square cases are separated.

At the square endpoint, the recurrence
\(\psi_1(x+1)=\psi_1(x)-x^{-2}\) gives, with
\(H_n=\sum_{k=1}^n k^{-1}\),
\[
\begin{aligned}
 V_{2n,2n}&=2H_{2n-2}+2\sum_{r=1}^{n-1}(2r-1)^{-2}
 +n\{\psi_1(n-\tfrac12)-\psi_1(n)\},\\
 V_{2n+1,2n+1}&=2H_{2n}+2\sum_{r=1}^{n}(2r-1)^{-2}
 +n\{\psi_1(n+1)-\psi_1(n+\tfrac12)\}.
\end{aligned}
\]
The braced differences are \(O(n^{-2})\).  Thus
\(H_n-\log n\to\gamma_E\) and
\(\sum_{r\ge1}(2r-1)^{-2}=\pi^2/8\) give the variance constant.  Moreover,
absolute convergence of (B.1) gives
\[
 \mathcal A_0
 =16\sum_{k\ge1}\sum_{\ell\ge0}(k+2\ell)^{-3}
 =8\zeta(2)+8\sum_{\text{odd }r\ge1}r^{-3}
 =\frac{4\pi^2}{3}+7\zeta(3).
\]
Consequently,
\[
 V_{p,p}=2\log p+2\gamma_E+\frac{\pi^2}{4}+o(1),\qquad
 A_{p,p}\longrightarrow\frac{4\pi^2}{3}+7\zeta(3). \tag{C.2}
\]
Together with (C.1), these calculations give the asserted uniform
asymptotics; substitution into
\eqref{eq:unified-equivalent} gives the regime table.
\end{proof}

\begin{proof}[Proof of Corollary~\ref{cor:square-supremum}]
For the supremum over \(m\ge p\), split into bounded \(d\), diverging
\(d=o(p)\), and \(d\asymp p\) or larger. The latter two classes vanish after
multiplication by \((\log p)^{3/2}\). For fixed \(d\), the endpoint series
is maximized at \(d=0\) by termwise monotonicity. Combining this with (C.2)
proves Corollary~\ref{cor:square-supremum} and gives
\[
 C_*=
 \frac{4\pi^2/3+7\zeta(3)}{24\sqrt\pi},
 \qquad 0.50715638<C_*<0.50715639.
\]
The rational interval follows from integral tail bounds for \(\zeta(3)\)
and rational bounds for \(\pi\).
\end{proof}

\begingroup
\setlength{\abovedisplayskip}{2.5pt plus 1pt minus 1pt}
\setlength{\belowdisplayskip}{2.5pt plus 1pt minus 1pt}
\setlength{\abovedisplayshortskip}{1.5pt plus 1pt}
\setlength{\belowdisplayshortskip}{2pt plus 1pt minus 1pt}
\section{General population correlation: reduction and remainder}
\label{app:general-decomp}

Let \(D_\Sigma=\operatorname{diag}(\Sigma)^{1/2}\). Pearson normalization
is unchanged when \(X\) is replaced by \(XD_\Sigma^{-1}\).  For this
population standardized data put \(S=X^{\mathsf T}PX\).  Gaussian projection
then gives \(S\stackrel d=R^{1/2}W_0R^{1/2}\), where
\(W_0\sim W_p(m,I_p)\). On the canonical probability space set
\(S_R^{\mathrm{can}}=R^{1/2}W_0R^{1/2}\) and let
\(\widehat R^{\mathrm{can}}\) be its correlation normalization. Then
\(\widehat R^{\mathrm{can}}\stackrel d=\widehat R\).
Since
\[
 \det\widehat R^{\mathrm{can}}=det S_R^{\mathrm{can}}
 \prod_{i=1}^{p}(S_R^{\mathrm{can}})_{ii}^{-1},
 \tag{D.1}
\]
subtracting \(\log\det R+b_{m,p}\), writing
\((S_R^{\mathrm{can}})_{ii}=m(1+g_i)\), and
adding and subtracting \(\sum_i g_i\) gives
\[
 \log\det\widehat R^{\mathrm{can}}-\log\det R-b_{m,p}=M_R-E_R
 \quad\text{almost surely}.
\tag{D.2}
\]
This proves \eqref{eq:general-decomposition} without approximation on the
canonical space and proves the corresponding equality in distribution for
the original sample statistic.

Diagonal standardization produces the radial function: \(Q_i=S_{ii}\) is
the squared radius of a projected Gaussian column, so \(Q_i\sim\chi_m^2\),
and its diagonal contribution is \(\log(Q_i/m)\).  Its linear part
\((Q_i-m)/m\) is transferred to \(M_R\), leaving in \(E_R\) the centered
remainder
\[
 h_m(q):=\log(q/m)-(q-m)/m
 -\mathbb E\!\left\{\log(Q/m)-(Q-m)/m\right\},\qquad Q\sim\chi_m^2.
\tag{D.3}
\]
This function depends only on the squared radius \(q\). Since
\(\operatorname{Corr}(Q_i,Q_j)=R_{ij}^2\), expand \(h_m\) in the Laguerre
basis of \(L^2(\chi_m^2)\). For \(\alpha>-1\) and \(k=0,1,\ldots\), the generalized Laguerre
polynomial has the equivalent Rodrigues and finite sum representations
\[
 L_k^{(\alpha)}(x):=\frac{e^x x^{-\alpha}}{k!}
 \frac{\mathrm d^k}{\mathrm dx^k}\{e^{-x}x^{k+\alpha}\}
 =\sum_{j=0}^{k}(-1)^j\binom{k+\alpha}{k-j}\frac{x^j}{j!}.
\tag{D.4}
\]
Here the binomial coefficient has its generalized meaning. For
\(Q\sim\chi_m^2\) and \(M=m/2\),
the family \(L_k^{(M-1)}(Q/2)\), \(k\ge0\), is complete and orthogonal in
\(L^2(\chi_m^2)\). Centering and linear subtraction eliminate degrees zero
and one in \(h_m\). Integration by parts gives, for
\(k,\ell\ge0\) in the first identity and \(k\ge1\) in the second,
\[
\begin{aligned}
 \mathbb E\{L_k^{(M-1)}(Q/2)L_\ell^{(M-1)}(Q/2)\}
 &=\mathbf 1_{\{k=\ell\}}\frac{(M)_k}{k!},\\
 \mathbb E\{\log Q\,L_k^{(M-1)}(Q/2)\}&=-\frac1k.
\end{aligned}
\tag{D.5}
\]
The joint gamma law of \((Q_i,Q_j)\) has the classical Kibble expansion:
the covariance of its degree \(k\) Laguerre components is multiplied by
\(R_{ij}^{2k}\) \cite{Kibble1941,NadarajahKotz2006}. Parseval therefore gives
\[
 \operatorname{Cov}(\log Q_i,\log Q_j)
 =\sum_{k\ge1}\frac{(k-1)!}{k(M)_k}R_{ij}^{2k}=c_m(R_{ij}).
\tag{D.6}
\]
Its first term is \(2R_{ij}^2/m\); the remaining nonnegative terms are at
most \(4R_{ij}^4/m^2\).  Applied to the centered radial remainder and summed,
this gives
\[
 \mathbb E E_R^2\le
 \frac{4(p+a_R)}{m^2},
\tag{D.7}
\]
which is \eqref{eq:ER-L2}. Partitioning around coordinate \(i\) factors
\(\det W_0\) into \(Q_i\) times a Schur complement independent of \(Q_i\).
Hence
\[
 \operatorname{Cov}(\log\det W_0,\log Q_i)
 =\operatorname{Var}(\log Q_i)=\psi_1(M).
\tag{D.8}
\]
Expanding the variance of
\(\log\det W_0-\sum_i\log Q_i\) now proves
\eqref{eq:tau-exact}--\eqref{eq:tau-comparison}. Specializing the Wishart
transform \eqref{eq:appendix-wishart-transform} to \(M_R\) and differentiating
twice at zero also yields
\[
 \operatorname{Var}(M_R)
 =s_R^2+p\{\psi_1(m/2)-2/m\},
 \qquad
 0\le\frac{\operatorname{Var}(M_R)-s_R^2}{s_R^2}
 \le\frac4{p-1}.                                   
\tag{D.9}
\]
\endgroup
\section{Wishart transform and the general bounds}\label{app:wishart}

\begin{proof}[Proof of Theorem~\ref{thm:general-leading}]
For real diagonal \(D\) with \(I_p+2D\succ0\) and
\(\Re(m/2+z)>(p-1)/2\), the Wishart density and matrix gamma integral
\cite[Theorems~3.2.1 and 2.1.11]{Muirhead1982} give
\[
 \mathbb E\!\left[
  (\det W_0)^z e^{-\operatorname{tr}(DW_0)}\right]
 =
 2^{pz}\frac{\Gamma_p(m/2+z)}{\Gamma_p(m/2)}
 \det(I_p+2D)^{-(m/2+z)}.                            \tag{E.1}
 \label{eq:appendix-wishart-transform}
\]
For complex diagonal \(D\) with \(\Re(1+2D_{ii})>0\) for every \(i\),
the same identity follows by holomorphic continuation in \(D\) from the
real source domain. Here
\(\det(I_p+2D)^{-(m/2+z)}
:=\exp\{-(m/2+z)\operatorname{tr}\Log(I_p+2D)\}\), where \(\Log\) is the
principal matrix logarithm on the open right half plane, equivalently the
branch obtained by analytic continuation from \(D=0\).
For \(M_R\), write \(R=O^{\mathsf T}\Lambda O\) and use
\(OW_0O^{\mathsf T}\stackrel d=W_0\). With \(z=it/s_R\) and
\(D=z\Lambda/m\), the eigenvalues of \(I_p+2D\) are
\(1+2it\lambda_j(R)/(ms_R)\), all with real part one. Thus the spectrum
stays in the open right half plane and there is no branch ambiguity.
Let \(K_R(t)\) be the logarithm of
\(\mathbb E\exp(itM_R/s_R)\), normalized by \(K_R(0)=0\).
Differentiating three times and applying the
polygamma integral gives
\[
 \sup_{t\in\mathbb R}|K_R'''(t)|
 \le C\left\{\lambda_{m,p}+\rho_R+\frac1p\right\}.   \tag{E.2}
\]
The bound is global because the gamma and spectral denominators on the
imaginary axis have modulus at least their positive real parts. Its terms
arise from the log determinant gamma product, the cubic trace of \(R-I_p\),
and replacement of the actual leading variance by \(s_R^2\). Monotonicity of
Schatten norms \cite[Chapter~IV]{Bhatia1997} gives
\[
 \operatorname{tr}|R-I_p|^3\le a_R^{3/2}
\]
together with the lower bound in \(s_R^2\) yields
\(\rho_R\le2^{-3/2}m^{-1/2}\).

Feller's smoothing inequality \cite[Lemma~XVI.3.1]{Feller1971}, with
(E.1) ensuring a nonvanishing characteristic function and a global analytic
logarithm, and (E.2) controlling its cubic remainder throughout the chosen
smoothing interval, gives
\[
 d_{\rm K}(M_R/s_R,N)
 \le C\{\lambda_{m,p}+p^{-1}+\rho_R\}.
\]
\end{proof}

\begin{proof}[Proof of Theorem~\ref{thm:general-full}]
For every \(\varepsilon>0\), the exact decomposition, Gaussian
anticoncentration, and Chebyshev's inequality give
\begin{equation}
 d_{\rm K}((M_R-E_R)/s_R,N)
 \le d_{\rm K}(M_R/s_R,N)
      +C\varepsilon+\frac{\mathbb E E_R^2}{\varepsilon^2s_R^2}.
 \tag{E.3}\label{eq:general-perturbation}
\end{equation}
Choosing \(\varepsilon=Q_R^{1/3}\) proves
\eqref{eq:general-full-bound}. The correlation matrix trace constraint,
the null lower bound in \(s_R\), and a split between \(m-p\le p\) and
\(m-p>p\) absorb the explicit trace terms into
\(C\{\lambda_{m,p}+p^{-1/3}\}\). This proves
\eqref{eq:general-simple-bound}. The cube
root enters only in this last Chebyshev and anticoncentration balance.
\end{proof}

\section{Auxiliary estimates and normalization checks}
\label{app:auxiliary}

For completeness, we record the finite inequalities used when the preceding
analytic bounds are converted to the normalizations in the theorem
statements. From \eqref{eq:polygamma-series} and monotonicity,
\[
 \frac1x+\frac1{2x^2}
 \le \psi_1(x)
 \le \frac1x+\frac1{x^2},\qquad x>0,                \tag{F.1}
\]
and
\[
 \frac1{x^2}\le-\psi_2(x)
 \le \frac1{x^2}+\frac2{x^3}.                       \tag{F.2}
\]
Indeed, isolate the first summand and bound the tail by the corresponding
integral; applying the trapezoidal correction gives the lower half of
(F.1). Summing (F.1) over \(a_j\) and subtracting its value at \(M\) proves
the positivity of the exact null variance and the comparisons with the
harmonic scale used in (C.1). Applying (F.2) in the same way proves the
third cumulant majorant used in (B.2). No asymptotic interchange is needed
for these steps: all terms are nonnegative after the displayed signs are
extracted.

The leading scale for general \(R\) has a similarly direct comparison. Write
\(A=R-I_p\). Since \(\operatorname{tr}A=0\) for a correlation matrix,
orthogonal invariance makes the null component uncorrelated with
\(\operatorname{tr}(AW_0)\), while
\(\operatorname{Var}\{\operatorname{tr}(AW_0)/m\}
=2\operatorname{tr}(A^2)/m\).  Hence the approximating variance is
\[
 s_R^2=V_{m,p}+\frac{2}{m}\operatorname{tr}(A^2). \tag{F.3}
\]
The exact variance of \(M_R\) differs from (F.3) by
\[
 p\{\psi_1(m/2)-2/m\}.
\]
Inequality (F.1), \(m\ge p\), and the elementary lower bound for
\(V_{m,p}\) give the ratio estimate in (D.9). Thus replacing the actual
standard deviation of \(M_R\) by \(s_R\) costs \(O(p^{-1})\) in
Kolmogorov distance, using the positive affine Gaussian comparison
\[
 d_{\rm K}(aN,N)\le C|a-1|,\qquad a>0.              \tag{F.4}
\]
The proof of (F.4) differentiates \(\Phi(x/a)\) in \(a\) and uses
\(\sup_x|x|\phi(x)<\infty\).

To control the population cubic term, monotonicity of finite dimensional
Schatten norms \cite[Chapter~IV]{Bhatia1997}, namely
\((\sum_i|\lambda_i(A)|^3)^{1/3}
\le(\sum_i\lambda_i(A)^2)^{1/2}\), gives
\[
 \operatorname{tr}|A|^3
 \le \{\operatorname{tr}(A^2)\}^{3/2}.
\]
Combining this inequality with (F.3), and using the null component when the
correlation energy is small, proves the uniform estimate
\[
 \rho_R\le\frac{1}{2^{3/2}\sqrt m}.                 \tag{F.5}
\]
For the nonlinear term, \(\operatorname{tr}(A^2)\le p(p-1)\) and the two
components in (F.3) imply
\[
 Q_R\le \frac4{p-1}+\frac2m.                        \tag{F.6}
\]
Balancing \(C\varepsilon+Q_R/\varepsilon^2\) at
\(\varepsilon=Q_R^{1/3}\), then splitting according as the null variance or
the correlation energy term dominates, gives the simplified
\(p^{-1/3}\) envelope printed in Theorem~\ref{thm:general-full}.

The centering and population standardization at the start of Appendix~D also
prove that the canonical statistic has exactly the original sample law.
Consequently its center is \(\log\det R+b_{m,p}\), not an asymptotic
replacement; this is the arbitrary covariance bridge in
Table~\ref{tab:lean-crosswalk}.
\section{Lean statement crosswalk and proof boundary}
\label{app:lean-crosswalk}\label{app:lean-ledger}\label{app:ledger}

Table~\ref{tab:lean-crosswalk} records the paper facing boundary of the
thirteen principal public theorem endpoints in the published
\texttt{v1.1.2} capsule~\cite{ZhaoLeanVerification2026}.
``Exact/equivalent'' describes the endpoint level relation after notation
expansion or a proved bridge. These endpoints are not the complete list of
supporting declarations in their dependency cone. The verification capsule
records the audited formal route for every numbered equation, including any
explicit remaining hypothesis;
ordinary mathematical prose is not kernel checked.

\begin{table}[h]
\centering
\begingroup
\fontsize{5.8}{6.0}\selectfont
\setlength{\tabcolsep}{3pt}
\renewcommand{\arraystretch}{0.88}
\caption{Statement-level Lean crosswalk.}\label{tab:lean-crosswalk}
\begin{tabular}{p{0.22\textwidth}p{0.50\textwidth}p{0.16\textwidth}}
\toprule
Paper result & Lean endpoint & Relation\tabularnewline
\midrule
Proposition~\ref{prop:beta-product} &
\nolinkurl{map_centeredSampleCorrelationDet_succ_eq_map_product_betaFactors}
& Exact\tabularnewline
Theorem~\ref{thm:null-edgeworth} &
\nolinkurl{uniformNullEdgeworthTarget_proved};
\nolinkurl{tendsto_uniformActualNullSharpKolmogorov_relative_error_zero}
& Exact\tabularnewline
Theorem~\ref{thm:general-leading} &
\nolinkurl{paperTheoremFiveOne_exact}
& Exact/equivalent\tabularnewline
Theorem~\ref{thm:general-full} &
\nolinkurl{paperTheoremFiveTwo_exact};
\nolinkurl{paperTheoremFiveTwo_arbitraryCovariance_exact}
& Exact/equivalent\tabularnewline
Proposition~\ref{prop:asymptotics} &
\nolinkurl{tendsto_nullASeries_div_uniformScale};
\nolinkurl{paperEquationSixThree_exact};
\nolinkurl{tendsto_nullVSeries_div_uniformScale};
\nolinkurl{tendsto_square_nullVSeries_sub_two_log};
\nolinkurl{paperEquationSixSix_exact}
& Exact/equivalent\tabularnewline
Corollary~\ref{cor:square-supremum} &
\nolinkurl{tendsto_scaledNullKolmogorovSup_closed};
\nolinkurl{nullSharpSupremumConstant_decimal8}
& Exact\tabularnewline
\bottomrule
\end{tabular}
\endgroup
\end{table}

The five public endpoints in the Proposition~\ref{prop:asymptotics} row cover
\eqref{eq:A-uniform}--\eqref{eq:A0}. Equation \eqref{eq:Ad} is the formal
definition used by these endpoints rather than a separate theorem endpoint.
The endpoint \nolinkurl{paperEquationSixThree_exact} packages the explicit
tail bounds and growing gap equivalent in \eqref{eq:A-growing};
\nolinkurl{paperEquationSixSix_exact} packages the square constant, recurrence,
and strict decrease in \eqref{eq:A0}.
The optional exact variance identity \eqref{eq:tau-exact} lies outside the
unconditional crosswalk.  Its deterministic Kibble series and bounds are
kernel checked, but in the current Lean release its final probability
identification assumes an explicit scalar Kibble limit certificate.  That
certificate is an ordinary unproved hypothesis, not a project axiom.  This
conditional identity is not used in the formal endpoints for
Theorems~\ref{thm:general-leading} and \ref{thm:general-full}.

The release contains the complete source, toolchain and dependency pins,
verification scripts, and exact statement crosswalk described in
Section~\ref{sec:lean}.  Kernel checking establishes the encoded deductions;
bibliographic priority and informal prose remain scholarly claims.

\endgroup

\begingroup
\def\bibfont{\normalsize\raggedright}
\def\bdoi#1{\href{https://doi.org/#1}{\nolinkurl{https://doi.org/#1}}}
\bibliographystyle{unsrtnat}
\bibliography{references}
\endgroup

\end{document}